\documentclass[12pt]{amsart}
\usepackage{amsfonts,amsmath,amssymb,amscd,mathrsfs}
\usepackage[hidelinks]{hyperref}
\usepackage{xcolor}
\usepackage{soul}
\sethlcolor{cyan}
\usepackage{verbatim}
\usepackage{enumitem}
\usepackage{hyperref}
\usepackage{tikz-cd}

\numberwithin{equation}{section}

\newtheorem{theorem}{Theorem}[section]

\newtheorem{corollary}[theorem]{Corollary}
\newtheorem{proposition}[theorem]{Proposition}

\theoremstyle{definition}
\newtheorem{definition}[theorem]{Definition}
\newtheorem{example}[theorem]{Example}

\theoremstyle{remark}
\newtheorem{remark}[theorem]{Remark}

\def \D{\mathbb{D}}

\def \Z{\mathbb{Z}}

\newcommand{\cla}{\mathcal{A}}
\newcommand{\clb}{\mathcal{B}}

\newcommand{\cle}{\mathcal{E}}
\newcommand{\clh}{\mathcal{H}}

\newcommand{\clm}{\mathcal{M}}

\newcommand{\clk}{\mathcal{K}}

\newcommand{\clf}{\mathcal{F}}
\newcommand{\clq}{\mathcal{Q}}

\subjclass[2020]{Primary: 46L05, 46L80; Secondary: 47A13, 47A53, 46L55}
\keywords{Odometer semigroup, Fock representations, C*-algebras, Fredholm index, K-theory, essential extensions, Cuntz-Toeplitz algebra}

\begin{document}

\title{Finite Blaschke Symbols and the $K$-Theory of Scalar
Odometer $C^*$-Algebras}

\author{Mansi Anil Suryawanshi}
\address{Department of Mathematics, Technion---Israel Institute of Technology, Haifa 32000, Israel.}
\email{suryawanshi@campus.technion.ac.il; mansisuryawanshi1@gmail.com}

\date{\today}
\begin{abstract}
Let $O_n$ be the odometer semigroup, and let
$\cla_\xi=C^*(S_1,\ldots,S_n,W_\xi)\subseteq\clb(\clf_n^2)$
be the $C^*$-algebra generated by the left creation operators and the
scalar odometer map associated with a symbol $\xi\in\clf_n^2$.
We show that $\cla_\xi$ contains the compact operators for every scalar
symbol.
For an isometric scalar symbol, we prove that $W_\xi$ is Fredholm if
and only if the associated inner function is a finite Blaschke product. We further show that the image of $\cla_\xi$
in the Calkin algebra is canonically isomorphic to the odometer boundary
quotient $\clq(O_n)$.

If the associated finite Blaschke product has degree $d$, then $\operatorname{ind}(W_\xi)=-d.$
For $d\geq1$, we obtain
$K_0(\cla_\xi)\cong\mathbb Z\oplus\mathbb Z_{d(n-1)}$
and
$K_1(\cla_\xi)=0$,
whereas for $d=0$,
$K_0(\cla_\xi)\cong\mathbb Z^2$
and
$K_1(\cla_\xi)\cong\mathbb Z$.
Consequently, for fixed $n\geq2$, finite Blaschke symbols of distinct
degrees generate non-isomorphic $C^*$-algebras.
\end{abstract}

\maketitle

\section{Introduction}
\label{sec:introduction}
Let $n\geq 2$. The odometer semigroup $O_n$ is the unital semigroup
generated by $w,v_1,\ldots,v_n$, subject to the relations
\[
wv_i=v_{i+1},
\qquad 1\leq i<n,
\qquad\text{and}\qquad
wv_n=v_1w.
\]
These relations encode the carrying rule of the classical adding machine.
The semigroup is closely related to the positive Baumslag--Solitar monoid \cite{Foreman, mansi2025}
and admits a Zappa--Sz'ep product description; see 
\cite{Geb}. The corresponding classical topological odometer has
long played an important role in ergodic theory and measure-preserving
dynamics, with further connections to Toeplitz algebras, von Neumann
algebras, and semigroup $C^*$-algebras; see
\cite{davidson1996,douglas1998banach,li2019}.
From the operator-algebraic viewpoint, representations of $O_n$ belong to
the broader framework of semigroup and correspondence $C^*$-algebras,
index theory, boundary quotients, topological subshifts, and
scale-invariant dynamics; see
\cite{brownlowe2012,cuntz1977,julien2016,laca1996,li2022,nica1992,pimsner1997,pimsner1980}.
Its boundary quotient is canonically isomorphic to Cuntz's $n$-adic ring
$C^*$-algebra $\mathcal Q_n$; see \cite{li2019}.

Fock-space representations of $O_n$ were studied in
\cite{mansi2025}. Given a Hilbert space $\cle$ and a bounded operator
$
L:\cle\longrightarrow\clf_n^2\otimes\cle,
$
an associated odometer map
$
W_L\in\clb(\clf_n^2\otimes\cle)
$
was constructed. Together with the amplified left creation operators, these
maps provide a parametrization of the Fock representations of $O_n$.
Criteria for $W_L$ to be isometric, unitary, or Nica-covariant were also
obtained. The operator-theoretic structure of $W_L$, including its adjoint,
canonical decompositions, and Toeplitz realization, was further developed in
\cite{Suryawanshi2026}.

The present paper considers the scalar case and studies the $C^*$-algebra
generated by an odometer map together with the left creation operators.
For a scalar symbol $\xi\in\clf_n^2$, set
$$
\cla_\xi
=
C^*(S_1,\ldots,S_n,W_\xi)
\subseteq
\clb(\clf_n^2).
$$
Rather than studying only the individual operator $W_\xi$, we investigate
the compact ideal of $\cla_\xi$, its image in the Calkin algebra, its ideal
structure, and its $K$-theory.

Although the $K$-theory of the odometer boundary quotient
$\mathcal Q(O_n)$ is known, to the best of our knowledge the
concrete Toeplitz-type $C^*$-algebras generated by scalar
Fock-space representations of the odometer semigroup have not
previously been investigated from a $K$-theoretic viewpoint.
In particular, we show that the connecting map of the associated
compact extension records the finite Blaschke degree of the symbol.

Our first observation is independent of the symbol. The Cuntz--Toeplitz
algebra generated by $S_1,\ldots,S_n$ contains the rank-one matrix units
associated with the canonical Fock basis. Consequently, for every scalar symbol $\xi$,
$$
\clk(\clf_n^2)\subseteq\cla_\xi.
$$

We next determine which scalar isometric odometer maps are Fredholm.
Scalar isometric symbols are naturally identified with inner functions on
the unit disk. More precisely, if $\clm^\perp$ denotes the closed span of
$\{e_1^{\otimes p}:p\geq 0\}$, then the unitary
$$
U_1:\clm^\perp\longrightarrow H^2(\mathbb D),
\qquad
U_1(e_1^{\otimes p})=z^p,
$$
associates to an isometric scalar symbol $\xi$ an inner function
$
\Theta_\xi=U_1\xi.
$
The kernel of the adjoint of $W_\xi$ is identified with the classical
model space
$$
U_1(\ker W_\xi^*)
=
H^2(\mathbb D)\ominus\Theta_\xi H^2(\mathbb D).
$$
It follows that $W_\xi$ is Fredholm if and only if $\Theta_\xi$ is a
finite Blaschke product. If its degree is $d\geq 0$, then
$
\operatorname{ind}(W_\xi)=-d.
$
Equivalently,
$
I-W_\xi W_\xi^*
$
is a finite-rank projection of rank $d$. We refer to such a symbol as a
finite Blaschke symbol of degree $d$. The case $d=0$ corresponds to a
unimodular constant, in which case $W_\xi$ is unitary.

The compact-defect condition permits us to identify the Calkin image of
$\cla_\xi$. We obtain
$$
\cla_\xi/\clk(\clf_n^2)
\cong
\clq(O_n).
$$
In particular, every finite Blaschke symbol gives an essential extension
$$
0
\longrightarrow
\clk(\clf_n^2)
\longrightarrow
\cla_\xi
\overset{\Phi_\xi}{\longrightarrow}
\clq(O_n)
\longrightarrow
0.
$$
Using this extension, we compute the $K$-theory of $\cla_\xi$. 
For $d\geq 1$, we show
$$
K_1(\cla_\xi)=0,
\qquad
K_0(\cla_\xi)
\cong
\mathbb Z\oplus\mathbb Z_{d(n-1)}.
$$
The degree-zero case has different behavior:
$$
K_0(\cla_{\lambda\Omega})
\cong
\mathbb Z^2,
\qquad
K_1(\cla_{\lambda\Omega})
\cong
\mathbb Z,
\qquad
\lambda\in\mathbb T.
$$
Thus, for a finite Blaschke symbol $\xi$ of degree $d$, the $K$-groups are summarized as follows.
$$
\begin{array}{c|cc}
& d=0 & d\geq 1 \\ \hline
K_0(\cla_\xi)
& \mathbb Z^2
& \mathbb Z\oplus\mathbb Z_{d(n-1)}
\\[2mm]
K_1(\cla_\xi)
& \mathbb Z
& 0
\end{array}
$$

As a consequence, finite Blaschke symbols of different degrees give rise
to non-isomorphic $C^*$-algebras. These results provide
a degree-detection theorem, but not a complete classification. Symbols of
the same degree yield the same $K$-groups and the same distinguished unit
class. A natural next step toward classification is to compare the Busby invariants of the corresponding essential extensions.

The paper is organized as follows. Section~2 contains the necessary preliminaries. In Section~3, we prove that
$\cla_\xi$ contains the compact operators for every scalar symbol and
deduce irreducibility and essentiality. Section~4 characterizes the
Fredholm scalar odometer maps in terms of finite Blaschke products and
computes their indices. In Section~5, we identify the boundary quotient and
derive the nuclearity and ideal structure of $\cla_\xi$. Section~6
computes the $K$-groups, treats the degree-zero case, and establishes the
degree-detection and non-isomorphism results. 

\section{Preliminaries and notation}
\label{sec:preliminaries}

Throughout the paper, let \(n\geq 2\). We write \(\mathbb Z_m\) for the
cyclic group \(\mathbb Z/m\mathbb Z\), with the convention that
\(\mathbb Z_1=\{0\}\).

\subsection{The full Fock space and scalar odometer maps}

Let
\[
\clf_n^2
=
\mathbb C\Omega
\oplus
\bigoplus_{m\geq 1}(\mathbb C^n)^{\otimes m}
\]
be the full Fock space over \(\mathbb C^n\), where \(\Omega\) denotes the
vacuum vector. Let \(\mathbb F_n^+\) denote the unital free semigroup on
the generators \(1,\ldots,n\), with identity element
\(\varnothing\). For a word $\mu=\mu_1\mu_2\cdots\mu_m\in\mathbb F_n^+,$
we write $|\mu|=m,
\,
e_\mu
=
e_{\mu_1}\otimes e_{\mu_2}\otimes\cdots\otimes e_{\mu_m},$
and set \(e_\varnothing=\Omega\). Then
$\{e_\mu:\mu\in\mathbb F_n^+\}$
is the canonical orthonormal basis of \(\clf_n^2\).
For \(1\leq i\leq n\), the left creation operator
\(S_i\in\clb(\clf_n^2)\) is defined by
\[
S_ie_\mu=e_{i\mu},
\qquad
\mu\in\mathbb F_n^+.
\]
The operators \(S_1,\ldots,S_n\) are isometries with pairwise orthogonal
ranges. For a word
\(\mu=\mu_1\cdots\mu_m\in\mathbb F_n^+\), we use the notation
$S_\mu=S_{\mu_1}\cdots S_{\mu_m},
\,
S_\varnothing=I.$
The orthogonal projection onto the vacuum space \(\mathbb C\Omega\) is
\[
P_\Omega
=
I-\sum_{i=1}^nS_iS_i^*.
\]

The odometer semigroup \(O_n\) is the unital semigroup generated by
\(w,v_1,\ldots,v_n\), subject to the relations
\[
wv_i=v_{i+1},
(1\leq i<n) \qquad \text{ and }\qquad
wv_n=v_1w.
\]
The following class of operators provides the canonical Fock-space
representations of these relations.

\begin{definition}
\label{def:scalar-odometer-map}
Let \(\xi\in\clf_n^2\). The \emph{scalar odometer map with symbol
\(\xi\)} is the bounded operator \(W_\xi\in\clb(\clf_n^2)\) determined by
$W_\xi\Omega=\xi$
and the following carrying rule:
Let $\mu=\mu_1\mu_2\cdots\mu_m\in\mathbb F_n^+$
be a nonempty word. If \(\mu\neq n^m\), let
$r=\min\{j\in\{1,\ldots,m\}:\mu_j\neq n\}.$
Then
\[
W_\xi e_\mu
=
e_1^{\otimes(r-1)}
\otimes e_{\mu_r+1}
\otimes e_{\mu_{r+1}}
\otimes\cdots\otimes e_{\mu_m}.
\]
If \(\mu=n^m\), then
\[
W_\xi e_{n^m}
=
e_1^{\otimes m}\otimes\xi.
\]
\end{definition}
Equivalently, if $L_\xi:\mathbb C\longrightarrow\clf_n^2,
\,
L_\xi(\lambda)=\lambda\xi,$
then \(W_\xi\) is the odometer map \(W_{L_\xi}\) associated with the
operator-valued symbol \(L_\xi\). By the boundedness result for odometer
maps in \cite{Suryawanshi2026}, the above prescription defines an element
of \(\clb(\clf_n^2)\), and
\[
\|\xi\|
\leq
\|W_\xi\|
\leq
1+\|\xi\|.
\]
The defining formula immediately gives the odometer covariance relations
\[
W_\xi S_i=S_{i+1},
\, (1\leq i<n) \qquad \text{and} \qquad
W_\xi S_n=S_1W_\xi.
\]
Thus \((W_\xi,S_1,\ldots,S_n)\) is an operator representation of the
defining relations of \(O_n\).
We shall be particularly interested in those symbols for which
\(W_\xi\) is an isometry.

\begin{definition}
A vector \(\xi\in\clf_n^2\) is called an
\emph{isometric scalar symbol} if \(W_\xi\) is an isometry.
\end{definition}
We next introduce the one-variable subspaces that occur in the
characterization of isometric scalar symbols. Set
\[
\clm
=
\overline{\operatorname{span}}
\bigl\{
e_\mu:
|\mu|\geq1
\text{ and some letter of }\mu\text{ is different from }1
\bigr\}.
\]
Equivalently, $\clm^\perp
=
\overline{\operatorname{span}}
\{e_1^{\otimes p}:p\geq0\}.$
Define a unitary operator $U_1:\clm^\perp\longrightarrow H^2(\D)$
by $U_1(e_1^{\otimes p})=z^p,
\, p\geq0.$
Then
\[
U_1\bigl(S_1|_{\clm^\perp}\bigr)U_1^*=M_z,
\]
where \(M_z\) denotes the unilateral shift on \(H^2(\D)\).
For \(\xi\in\clm^\perp\), define
$\Theta_\xi=U_1\xi\in H^2(\D).$
The following is the scalar specialization of
\cite[Theorem~4.4]{Suryawanshi2026}.

\begin{theorem}
\label{thm:operator_characterization_symbol}
Let \(\xi\in\clf_n^2\). Then the following statements hold.
\begin{enumerate}
    \item The operator \(W_\xi\) is an isometry if and only if $\xi\in\clm^\perp$
    and \(\Theta_\xi=U_1\xi\) is an inner function.

    \item The operator \(W_\xi\) is unitary if and only if $ \xi=\lambda\Omega$
    for some \(\lambda\in\mathbb T\).
\end{enumerate}
\end{theorem}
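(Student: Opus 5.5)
The plan is to prove the theorem directly from the carrying rule of Definition~\ref{def:scalar-odometer-map}, by computing the Gram matrix of the images of the basis vectors. Split $\mathbb F_n^+$ into two classes. The first class consists of the nonempty words $\mu\neq n^m$; the rule sends $e_\mu$ to a basis vector $e_{\mu'}$. The map $\mu\mapsto\mu'$ is injective: the first letter of $\mu'$ that is not $1$ determines $r$ and $\mu_r+1$, hence $\mu$. Its image is exactly the set of nonempty words that are not of the form $1^p$. Indeed, $\mu'$ always contains a letter $\mu_r+1\geq 2$. Conversely, a word $\nu=1^{r-1}k\nu''$ with $k\geq 2$ is the image of $n^{r-1}(k-1)\nu''$.

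So on the first class $W_\xi$ is a bijection of basis vectors onto an orthonormal basis of $\clm$. The second class is the words $n^m$, $m\ge0$, with $n^0=\varnothing$. On these $W_\xi$ acts by $W_\xi e_{n^m}=S_1^m\xi$, and this includes $m=0$. Hence $W_\xi$ is an isometry iff three things hold:
\begin{itemize}
\item each $S_1^m\xi$ is a unit vector, which is automatic once $\|\xi\|=1$;
\item the vectors $S_1^m\xi$ are pairwise orthogonal;
\item each $S_1^m\xi$ is orthogonal to $\clm$.
\end{itemize}
Isometry is equivalent to preserving inner products of basis vectors. Images within the first class are automatically orthonormal, so only these cross conditions matter.

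For part (1), note that $\clm^\perp$ is $S_1$-invariant. Also $S_1$ maps $\clm$ into $\clm$, and it maps $\clm^\perp$ isometrically onto $\overline{\operatorname{span}}\{e_1^{\otimes p}:p\ge1\}$. Orthogonality of $S_1^m\xi$ to $\clm$ for all $m$ therefore reduces, at $m=0$, to $\xi\in\clm^\perp$. For $m\geq 1$, write $\xi=\xi_0+\xi_1$ with $\xi_0\in\clm^\perp$ and $\xi_1\in\clm$. Then $S_1^m\xi_1\in\clm$ and $S_1^m\xi_0\perp\clm$, so $S_1^m\xi\perp\clm$ forces $S_1^m\xi_1=0$, i.e.\ $\xi_1=0$. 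Thus the conditions hold iff $\xi\in\clm^\perp$ and $\{S_1^m\xi\}_{m\geq 0}$ is orthonormal.

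Transport this by $U_1$, using $U_1S_1|_{\clm^\perp}U_1^*=M_z$. The second condition becomes $\langle z^m\Theta_\xi,\Theta_\xi\rangle=\delta_{m0}$ for all $m\ge0$. That is, the Fourier coefficients of $|\Theta_\xi|^2$ on $\mathbb T$ satisfy $\widehat{|\Theta_\xi|^2}(m)=\delta_{m0}$ for $m\geq 0$. Since $|\Theta_\xi|^2$ is real, the same holds for negative $m$ by conjugation. Hence $|\Theta_\xi|=1$ a.e., so $\Theta_\xi$ is inner, and the converse is immediate. This Fourier-coefficient step is the only genuinely analytic point.

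For part (2), assume $W_\xi$ is an isometry, so $\xi\in\clm^\perp$ with $\Theta_\xi$ inner. The range of $W_\xi$ is
\[
\clm\oplus\overline{\operatorname{span}}\{S_1^m\xi\}_{m\geq 0}.
\]
The second summand corresponds under $U_1$ to $\Theta_\xi H^2(\D)$. So $W_\xi$ is surjective iff $\Theta_\xi H^2(\D)=H^2(\D)$. By Beurling's theorem, or simply because $1\in\Theta_\xi H^2$ forces $1/\Theta_\xi\in H^2$ and hence makes $\Theta_\xi$ an invertible inner function, this holds iff $\Theta_\xi$ is a unimodular constant $\lambda\in\mathbb T$. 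That is exactly $\xi=\lambda\Omega$.

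The main care point is the bookkeeping in the first step: checking that the carrying map is a bijection onto the non-$1^p$ words, and that the words $n^m$, including the empty word, are exactly the ones whose images involve $\xi$. The second step, reducing orthogonality to $\clm$ to the single condition $\xi\in\clm^\perp$, also needs attention.
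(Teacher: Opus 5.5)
Your proof is correct. It follows a genuinely different route from the paper, which gives no argument for this statement. The paper imports it as the scalar case of Theorem~4.4 of \cite{Suryawanshi2026}, where odometer maps $W_L$ are treated for general symbols $L:\cle\to\clf_n^2\otimes\cle$.

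Your Gram-matrix computation is elementary and self-contained. The carrying rule maps $\{e_\mu:\mu\neq n^m\}$ bijectively onto the orthonormal basis of $\clm$, and the remaining basis vectors go to $S_1^m\xi$. The isometry conditions then collapse to $\xi\in\clm^\perp$ plus orthonormality of $\{z^m\Theta_\xi\}_{m\geq0}$, i.e.\ $|\Theta_\xi|=1$ a.e.\ on $\mathbb T$.

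This buys two things the citation hides.
\begin{enumerate}
\item Nothing depends on an a priori bound for $W_\xi$, since the Gram conditions themselves make $W_\xi$ isometric on finitely supported vectors. This is worth having, because the bound $\|W_\xi\|\leq1+\|\xi\|$ quoted in the preliminaries cannot hold for all $\xi$. For $\xi\in\clm^\perp$, after identifying $e_{n^m}$ with $z^m$, $W_\xi$ acts on $\overline{\operatorname{span}}\{e_{n^m}:m\geq0\}$ as multiplication by $\Theta_\xi$ on $H^2(\D)$, which is unbounded when $\Theta_\xi\notin H^\infty$.
\item Your identification $\operatorname{ran}W_\xi=\clm\oplus U_1^*\bigl(\Theta_\xi H^2(\D)\bigr)$ yields Proposition~\ref{prop:kernel_adjoint} immediately by taking orthogonal complements. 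The paper imports that result from the same reference as well.
\end{enumerate}
The citation, in turn, gives the result in arbitrary multiplicity.

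Two small points:
\begin{enumerate}
\item Your decomposition $\xi=\xi_0+\xi_1$ for $m\geq1$ is redundant. The $m=0$ condition already forces $\xi\in\clm^\perp$, and $S_1\clm^\perp\subseteq\clm^\perp$ handles the rest.
\item In part~(2) you should say why an inner $\Theta_\xi$ with $1/\Theta_\xi\in H^2(\D)$ is constant. The reason is that $1/\Theta_\xi$ is then inner too, so $|\Theta_\xi|\equiv1$ on $\D$ and $\Theta_\xi$ is constant.
\end{enumerate}
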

Thus the scalar isometric symbols are naturally identified with scalar
inner functions on the unit disk. The kernel of the adjoint has a
particularly useful description.

\begin{proposition}
\label{prop:kernel_adjoint}
Let \(\xi\in\clm^\perp\), and suppose that
\(\Theta_\xi=U_1\xi\) is inner. Then
\[
\ker W_\xi^*
=
\left\{
f\in\clm^\perp:
\left\langle S_1^{*p}f,\xi\right\rangle=0
\text{ for every }p\geq0
\right\}
=
\clm^\perp
\ominus
\overline{\operatorname{span}}
\{S_1^p\xi:p\geq0\}.
\]
Applying \(U_1\) gives
\[
U_1(\ker W_\xi^*)
=
H^2(\D)\ominus\Theta_\xi H^2(\D).
\]
\end{proposition}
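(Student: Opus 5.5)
The plan is to compute the range of $W_\xi$ directly from the carrying rule in Definition~\ref{def:scalar-odometer-map}. Since $W_\xi$ is an isometry by Theorem~\ref{thm:operator_characterization_symbol}, its range is closed, so $\ker W_\xi^*=(\operatorname{ran}W_\xi)^\perp$. We split the basis of $\clf_n^2$ into two families: the vectors $e_\mu$ with $\mu$ nonempty and $\mu\neq n^{|\mu|}$, and the vectors $e_{n^m}$ with $m\geq0$, where $e_{n^0}=\Omega$. We then describe the image of each family separately.

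For the first family, write $\mu=n^{r-1}\mu_r\mu_{r+1}\cdots\mu_m$ with $\mu_r\neq n$. By the carrying rule, $e_\mu$ is sent to the basis vector $e_\nu$ with $\nu=1^{r-1}(\mu_r+1)\mu_{r+1}\cdots\mu_m$. This $\nu$ has its first letter different from $1$ at position $r$, and that letter lies in $\{2,\dots,n\}$. Conversely, suppose $\nu$ is any word containing a letter $\neq1$. Let $r$ be the first position of such a letter and $k=\nu_r\geq2$. Then $\nu$ has the unique preimage $n^{r-1}(k-1)\nu_{r+1}\cdots$. Hence $W_\xi$ maps this orthonormal family bijectively onto the orthonormal basis $\{e_\nu\}$ of $\clm$. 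Its image therefore has closed span exactly $\clm$. This bookkeeping, namely checking that the correspondence is a genuine bijection of index sets, is the only step needing care.

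For the second family, $W_\xi e_{n^m}=e_1^{\otimes m}\otimes\xi=S_1^m\xi$ for all $m\ge0$. Since $\xi\in\clm^\perp$ and $S_1$ leaves $\clm^\perp$ invariant, these vectors lie in $\clm^\perp$. Under $U_1$ they correspond to $z^m\Theta_\xi$. These form an orthonormal set because $\Theta_\xi$ is inner, so multiplication by $\Theta_\xi$ is an isometry of $H^2(\D)$. Consequently
\[
\operatorname{ran}W_\xi=\clm\oplus\overline{\operatorname{span}}\{S_1^p\xi:p\geq0\}.
\]
Taking orthogonal complements gives $\ker W_\xi^*=\clm^\perp\ominus\overline{\operatorname{span}}\{S_1^p\xi:p\ge0\}$. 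For $f\in\clm^\perp$, the condition $f\perp S_1^p\xi$ for all $p$ is exactly $\langle S_1^{*p}f,\xi\rangle=0$ for all $p$, which gives the first description.

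Finally, we apply $U_1$. Since $U_1S_1|_{\clm^\perp}U_1^*=M_z$, the closed span of $\{S_1^p\xi\}$ is carried onto the closed span of $\{z^p\Theta_\xi:p\geq0\}$. This is the image of the polynomials under the isometry $M_{\Theta_\xi}$. Polynomials are dense in $H^2(\D)$ and the range of an isometry is closed, so this closed span equals $\Theta_\xi H^2(\D)$. Hence $U_1(\ker W_\xi^*)=H^2(\D)\ominus\Theta_\xi H^2(\D)$.
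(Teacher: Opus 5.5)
Your proof is correct, but it reaches the first equality by a different route from the paper. The paper does not derive that equality itself. It quotes the first description of $\ker W_\xi^*$ as the scalar case of \cite[Proposition~7.1]{Suryawanshi2026}, and then performs exactly your last two steps: the rewriting $\langle S_1^{*p}f,\xi\rangle=\langle f,S_1^p\xi\rangle$, and $U_1S_1^p\xi=z^p\Theta_\xi$ together with $\overline{\operatorname{span}}\{z^p\Theta_\xi:p\ge0\}=\Theta_\xi H^2(\D)$ for inner $\Theta_\xi$. You instead compute the range of $W_\xi$ directly from the carrying rule. Your correspondence $n^{r-1}(k-1)\nu_{r+1}\cdots\nu_m\leftrightarrow 1^{r-1}k\,\nu_{r+1}\cdots\nu_m$ (with $2\le k\le n$) is indeed a bijection between the nonempty words $\mu\neq n^{|\mu|}$ and the words containing a letter other than $1$, and $W_\xi e_{n^m}=S_1^m\xi\in\clm^\perp$. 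This makes the proposition self-contained and yields the sharper identity $\operatorname{ran}W_\xi=\clm\oplus\overline{\operatorname{span}}\{S_1^p\xi:p\ge0\}$. As a by-product, it shows that $W_\xi$ carries the canonical basis to an orthonormal set, so it reproves the ``if'' direction of Theorem~\ref{thm:operator_characterization_symbol}(1) rather than needing it. The paper's citation is shorter and places the statement within the operator-valued theory of $W_L$. One minor point: $\ker W_\xi^*=(\operatorname{ran}W_\xi)^\perp$ holds for every bounded operator, so closedness of the range is not needed. In fact your computation of $\overline{\operatorname{ran}W_\xi}$ works for any $\xi\in\clm^\perp$. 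For the kernel formulas, innerness of $\Theta_\xi$ enters only when identifying $U_1\bigl(\overline{\operatorname{span}}\{S_1^p\xi:p\ge0\}\bigr)$ with $\Theta_\xi H^2(\D)$.
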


\begin{proof}
The first equality is the scalar specialization of
\cite[Proposition~7.1]{Suryawanshi2026}. Since
$\left\langle S_1^{*p}f,\xi\right\rangle
=
\left\langle f,S_1^p\xi\right\rangle,$
the second equality follows. Moreover,
\[
U_1(S_1^p\xi)=z^p\Theta_\xi,
\qquad p\geq0.
\]
Since \(\Theta_\xi\) is inner,
$\overline{\operatorname{span}}
\{z^p\Theta_\xi:p\geq0\}
=
\Theta_\xi H^2(\D),$
which proves the final assertion.
\end{proof}

\subsection{The generated \(C^*\)-algebra and the Calkin algebra}

For a scalar symbol \(\xi\in\clf_n^2\), define
\[
\cla_\xi
=
C^*(S_1,\ldots,S_n,W_\xi)
\subseteq
\clb(\clf_n^2).
\]
This is the principal \(C^*\)-algebra studied in the paper.
For vectors \(x,y\) in a Hilbert space \(\clh\), let
\(\theta_{x,y}\in\clb(\clh)\) denote the rank-one operator
\[
\theta_{x,y}(h)=\langle h,y\rangle x,
\qquad h\in\clh.
\]
We denote by \(\clk(\clh)\) the \(C^*\)-algebra of compact operators on
\(\clh\).
For a Hilbert space \(\clh\), let us denote the Calkin algebra by
\[
\clq(\clh)
=
\clb(\clh)/\clk(\clh),
\]
and let $\pi_{\clh}:\clb(\clh)\longrightarrow\clq(\clh)$
be the quotient map
\[
\pi_{\clh}(T)=T+\clk(\clh).
\]
When \(\clh=\clf_n^2\), we write simply
$\pi=\pi_{\clf_n^2}.$
Recall that an operator \(T\in\clb(\clh)\) is Fredholm if and only if
\(\pi_{\clh}(T)\) is invertible in \(\clq(\clh)\) (see \cite{atkinson1951}). Its Fredholm index is
\[
\operatorname{ind}(T)
=
\dim\ker T-\dim\ker T^*.
\]
An operator \(T\in\clb(\clh)\) is called
\emph{essentially unitary} if \(\pi_{\clh}(T)\) is unitary, equivalently,
if
$I-T^*T\in\clk(\clh)$ and $
I-TT^*\in\clk(\clh).$
In particular, if \(T\) is an isometry, then \(T\) is essentially unitary
if and only if
$I-TT^*\in\clk(\clh).$

\subsection{$K$-theoretic conventions}
\label{subsec:k-theory-conventions}

For a projection $p$ and a unitary $v$, we denote their classes in
$K$-theory by $[p]_0$ and $[v]_1$, respectively. We follow the standard
conventions for $K$-theory and extensions of $C^*$-algebras; see
\cite{blackadar1998,bdf1977}.
A short exact sequence
$$
0
\longrightarrow
\mathcal I
\overset{i}{\longrightarrow}
\mathcal A
\overset{q}{\longrightarrow}
\mathcal B
\longrightarrow
0
$$
induces the six-term exact sequence
$$
\begin{array}{ccccc}
K_0(\mathcal I)
&
\overset{i_*}{\longrightarrow}
&
K_0(\mathcal A)
&
\overset{q_*}{\longrightarrow}
&
K_0(\mathcal B)
\\[2mm]
\uparrow\,\partial_1
&&&&
\downarrow\,\partial_0
\\[2mm]
K_1(\mathcal B)
&
\overset{q_*}{\longleftarrow}
&
K_1(\mathcal A)
&
\overset{i_*}{\longleftarrow}
&
K_1(\mathcal I).
\end{array}
$$
For an infinite-dimensional separable Hilbert space $\clh$,
$$
K_0(\clk(\clh))\cong\mathbb Z,
\qquad
K_1(\clk(\clh))=0,
$$
where the class of a rank-one projection corresponds to
$1\in\mathbb Z$.
We use the convention that, for an extension by $\clk(\clh)$, if
$v$ is a unitary in the quotient and $T$ is a Fredholm lift of $v$,
then
$$
\partial_1([v]_1)
=
\operatorname{ind}(T)
=
\dim\ker T-\dim\ker T^*.
$$

\section{The compact ideal}
Retain the notation
$\cla_\xi=C^*(S_1,\ldots,S_n,W_\xi).$
We begin by showing that \(\cla_\xi\) always contains the compact
operators, independently of the choice of \(\xi\).

\begin{proposition}\label{prop:compact-containment}
For every scalar symbol $\xi\in\clf_n^2$, $\clk(\clf_n^2)\subseteq\cla_\xi$.
\end{proposition}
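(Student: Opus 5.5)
The plan is to show that the Cuntz--Toeplitz subalgebra $C^*(S_1,\ldots,S_n)\subseteq\cla_\xi$ already contains every compact operator. The symbol $\xi$ and the operator $W_\xi$ will play no role, so the argument is uniform in $\xi$.

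The first step is to observe that the vacuum projection lies in $\cla_\xi$. Since the $S_i$ are isometries with pairwise orthogonal ranges whose sum is the orthocomplement of $\mathbb C\Omega$, we have
\[
P_\Omega=I-\sum_{i=1}^nS_iS_i^*\in\cla_\xi .
\]

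The second step produces the matrix units. For words $\mu,\nu\in\mathbb F_n^+$, consider $S_\mu P_\Omega S_\nu^*$. The relations $S_\nu^*e_\nu=\Omega$ and $P_\Omega S_\nu^*e_\eta=0$ for $\eta\neq\nu$ give
\[
S_\mu P_\Omega S_\nu^* = \theta_{e_\mu,e_\nu}.
\]
To check $P_\Omega S_\nu^*e_\eta=0$ for $\eta\neq\nu$, note that $S_\nu^*e_\eta$ is either $0$ or $e_{\eta'}$, where $\eta=\nu\eta'$ with $\eta'\neq\varnothing$; in both cases it is orthogonal to $\Omega$. Hence every matrix unit relative to the canonical basis $\{e_\mu\}$ lies in $\cla_\xi$.

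The final step is a closure argument. Linear combinations of the $\theta_{e_\mu,e_\nu}$ give all finite-rank operators supported on finite subsets of the basis. These are norm dense in $\clk(\clf_n^2)$: for compact $K$ and the finite-rank projections $P_N$ onto $\operatorname{span}\{e_\mu:|\mu|\le N\}$, we have $P_NKP_N\to K$ in norm. Since $\cla_\xi$ is norm closed, it follows that $\clk(\clf_n^2)\subseteq\cla_\xi$.

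There is no substantial obstacle. The only point needing care is the computation $S_\nu^*e_\eta$, which uses the orthogonality of the ranges of the $S_i$: $S_i^*e_{j\eta'}=\delta_{ij}e_{\eta'}$ and $S_i^*\Omega=0$.
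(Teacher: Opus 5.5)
Your proof is correct and takes essentially the same approach as the paper: $P_\Omega\in C^*(S_1,\ldots,S_n)$, the matrix units $S_\mu P_\Omega S_\nu^*=\theta_{e_\mu,e_\nu}$, and norm closure. You verify the matrix-unit identity and the density of finite-rank truncations (via $P_NKP_N\to K$) more explicitly than the paper does, which is a welcome addition.
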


\begin{proof}
Let $P_\Omega=I-\sum_{i=1}^n S_iS_i^*\in C^*(S_1,\ldots,S_n)$ be the
rank-one projection onto the vacuum vector. With respect to the canonical
Fock basis $\{e_\mu:\mu\in\mathbb F_n^+\}$, the operators
$S_\mu P_\Omega S_\nu^*$, $\mu,\nu\in\mathbb F_n^+$, are exactly the rank-one
matrix units $\theta_{e_\mu,e_\nu}$, so their linear span is dense in
$\clk(\clf_n^2)$. Since each $S_\mu P_\Omega S_\nu^*\in
C^*(S_1,\ldots,S_n)\subseteq\cla_\xi$, taking norm closure gives
$\clk(\clf_n^2)\subseteq\cla_\xi$.
\end{proof}

\begin{corollary}
\label{cor:irreducible-essential}
For every scalar symbol \(\xi\in\clf_n^2\), the algebra
\(\cla_\xi\) acts irreducibly on \(\clf_n^2\), and
\(\clk(\clf_n^2)\) is an essential ideal of \(\cla_\xi\).
\end{corollary}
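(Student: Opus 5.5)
The plan is to derive both assertions directly from Proposition~\ref{prop:compact-containment}, which gives $\clk(\clf_n^2)\subseteq\cla_\xi$, together with two standard facts about $C^*$-algebras containing the compacts.

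For irreducibility, I will show that the commutant $\cla_\xi'$ is trivial. Let $T\in\clb(\clf_n^2)$ commute with every element of $\cla_\xi$. In particular $T$ commutes with every rank-one operator $\theta_{x,y}$, since these all lie in $\clk(\clf_n^2)\subseteq\cla_\xi$. The relation $T\theta_{x,y}=\theta_{x,y}T$ reads $\theta_{Tx,y}=\theta_{x,T^*y}$. Taking $x=y=\Omega$ and evaluating at $\Omega$ gives $T\Omega=\langle T\Omega,\Omega\rangle\Omega=:\lambda\Omega$. Taking $x$ arbitrary and $y=\Omega$, and evaluating at $\Omega$, gives $Tx=\langle \Omega,T^*\Omega\rangle x=\lambda x$. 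So $T=\lambda I$, and $\cla_\xi'=\mathbb C I$. By Schur's lemma for $C^*$-algebras, equivalently the von Neumann bicommutant theorem, $\cla_\xi$ has no nontrivial closed invariant subspaces, i.e. it acts irreducibly. An alternative route is to note that any nonzero closed invariant subspace containing a vector $h\neq0$ also contains $\theta_{x,h}h=\|h\|^2x$ for every $x$, hence is all of $\clf_n^2$.

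For essentiality, recall that an ideal $\mathcal J$ of $\cla$ is essential when $a\mathcal J=0$ forces $a=0$. Take $a\in\cla_\xi$ with $a\clk(\clf_n^2)=0$. Then $a\theta_{x,x}=\theta_{ax,x}=0$ for every $x\in\clf_n^2$. Evaluating at $x$ gives $\|x\|^2ax=0$, so $ax=0$ for all $x$, and hence $a=0$. Since $\clk(\clf_n^2)$ is a closed two-sided ideal of $\clb(\clf_n^2)$ and is contained in $\cla_\xi$, it is automatically a closed two-sided ideal of $\cla_\xi$, and the computation above shows it is essential.

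There is no genuine obstacle here. The only point needing care is that the ideal property within $\cla_\xi$ is inherited from $\clb(\clf_n^2)$, and this uses exactly the containment supplied by Proposition~\ref{prop:compact-containment}.
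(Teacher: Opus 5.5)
Your proof is correct and follows essentially the same route as the paper. Both assertions are read off from $\clk(\clf_n^2)\subseteq\cla_\xi$ using rank-one operators and the identity $A\theta_{h,k}=\theta_{Ah,k}$; your commutant computation simply proves from scratch the irreducibility of $\clk(\clf_n^2)$, which the paper quotes. The only cosmetic difference is that you check essentiality in the annihilator form ($a\,\clk(\clf_n^2)=0\Rightarrow a=0$), whereas the paper checks $J\cap\clk(\clf_n^2)\neq\{0\}$ for every nonzero ideal $J$, which is the form used later in Proposition~\ref{prop:ideal-lattice}. Your form implies the paper's because $J\,\clk(\clf_n^2)\subseteq J\cap\clk(\clf_n^2)$.
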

\begin{proof} By Proposition~\ref{prop:compact-containment}, $ \clk(\clf_n^2)\subseteq \cla_\xi. $ Since the compact operators act irreducibly on \(\clf_n^2\), every reducing subspace for \(\cla_\xi\) is also reducing for \(\clk(\clf_n^2)\). It must therefore be either \(\{0\}\) or \(\clf_n^2\). Hence \(\cla_\xi\) acts irreducibly on \(\clf_n^2\). To prove essentiality, let \(J\subseteq\cla_\xi\) be a nonzero closed two-sided ideal, and choose \(0\neq A\in J\). There exists \(h\in\clf_n^2\) such that \(Ah\neq0\). Fix a nonzero vector \(k\in\clf_n^2\), and let \(\theta_{h,k}\) denote the corresponding rank-one operator. Since \[ \theta_{h,k}\in\clk(\clf_n^2)\subseteq\cla_\xi, \] we have \[ 0\neq A\theta_{h,k} = \theta_{Ah,k} \in J\cap\clk(\clf_n^2). \] Thus every nonzero closed two-sided ideal of \(\cla_\xi\) has nonzero intersection with \(\clk(\clf_n^2)\). Therefore \(\clk(\clf_n^2)\) is an essential ideal of \(\cla_\xi\). \end{proof}

\section{Finite Blaschke symbols and the Fredholm index}
We characterize the Fredholm scalar odometer maps and compute their
Fredholm indices. Recall that if \(\xi\) is an isometric scalar symbol,
then
$\Theta_\xi=U_1\xi$
is inner.

\begin{definition}
\label{def:finite-blaschke-symbol}
An isometric scalar symbol \(\xi\) is called a
\emph{finite Blaschke symbol of degree \(d\geq0\)} if
\(\Theta_\xi=U_1\xi\) is a finite Blaschke product of degree \(d\); that is,
\[
\Theta_\xi(z)
=
\lambda
\prod_{j=1}^d
\frac{z-a_j}{1-\overline{a_j}z},
\qquad
\lambda\in\mathbb T,
\quad
a_1,\ldots,a_d\in\mathbb D,
\]
where the zeros are counted with multiplicity. When \(d=0\), the product
is interpreted as the empty product, so that \(\Theta_\xi\) is a
unimodular constant.
We write \(\deg(\xi)=d\).
\end{definition}

\begin{example}
For $d \geq 0,$ with the convention \(e_1^{\otimes0}=\Omega\), the vector $\xi = e_1^{\otimes d}$ has finite Blaschke degree $d$, since $U_1\xi = z^d$. More generally, if
$\Theta(z) = \lambda \prod_{r=1}^d \frac{z - a_r}{1 - \overline{a_r} z}, \, |\lambda| = 1,\ a_1,\dots,a_d \in \mathbb D,$
is any finite Blaschke product of degree $d$, with Taylor coefficients $\Theta(z) = \sum_{p \ge 0} c_p z^p$, then
\[
\xi := U_1^{-1}\Theta = \sum_{p \ge 0} c_p\, e_1^{\otimes p} \in \mathcal M^\perp
\]
is a finite Blaschke symbol of degree \(d\).
\end{example}
\begin{theorem}\label{thm:index}
Let $\xi$ be an isometric scalar symbol.
Then the following statements are equivalent:
\begin{enumerate}
    \item $W_\xi$ is Fredholm;

    \item $I-W_\xi W_\xi^*$ is compact;

    \item \(\xi\) is a finite Blaschke symbol.
\end{enumerate}
If these conditions hold and $\deg(\xi)=d$, then
$\operatorname{ind}(W_\xi)=-d.$
\end{theorem}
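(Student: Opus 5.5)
Since $\xi$ is an isometric scalar symbol, $W_\xi$ is an isometry, so $\ker W_\xi=\{0\}$ and $W_\xi^*W_\xi=I$. The whole argument therefore rests on the cokernel $\ker W_\xi^*$. Proposition~\ref{prop:kernel_adjoint} identifies it, via the unitary $U_1$, with the model space $K_\Theta:=H^2(\D)\ominus\Theta_\xi H^2(\D)$. Moreover $I-W_\xi W_\xi^*$ is exactly the orthogonal projection onto $\ker W_\xi^*$, because $W_\xi W_\xi^*$ is the range projection of the isometry $W_\xi$ and $\operatorname{ran}W_\xi=(\ker W_\xi^*)^\perp$. Hence all three conditions should reduce to the single statement $\dim K_\Theta<\infty$.

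The steps are as follows.

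\textbf{(1)$\Leftrightarrow$(2).} If $W_\xi$ is Fredholm, then $\ker W_\xi^*$ is finite dimensional, so the projection $I-W_\xi W_\xi^*$ has finite rank and is compact. Conversely, if this projection is compact, then its range is finite dimensional (a compact projection has finite rank). Since $W_\xi$ is an isometry, it is injective with closed range. Its range has finite codimension, so $W_\xi$ is Fredholm. Alternatively, $\pi(W_\xi)$ is unitary in the Calkin algebra, and Atkinson's theorem applies.

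\textbf{(2)$\Leftrightarrow$(3).} By the above, both conditions are equivalent to $\dim K_\Theta<\infty$, where $\Theta=\Theta_\xi$ is inner. We need the classical fact that $\dim(H^2\ominus\Theta H^2)<\infty$ if and only if $\Theta$ is a finite Blaschke product, in which case the dimension equals the degree $d$.

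For the forward implication, let $\Theta$ be a finite Blaschke product with distinct zeros $a_j$. Then $K_\Theta$ is spanned by the Szeg\H{o} kernels $k_{a_j}(z)=(1-\overline{a_j}z)^{-1}$, and $\Theta H^2$ is precisely the set of $H^2$ functions vanishing at the $a_j$. With repeated zeros, the spanning set is the derivatives of these kernels up to multiplicity minus one. In both cases $\dim K_\Theta=d$. For $d=0$ the constant $\Theta$ gives $\Theta H^2=H^2$, so $K_\Theta=0$.

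For the converse, suppose $\Theta$ is inner but not a finite Blaschke product. Then its inner--outer factorization contains either infinitely many zeros or a nontrivial singular factor. If there are infinitely many zeros, the kernels at those zeros are linearly independent and lie in $K_\Theta$, so $K_\Theta$ is infinite dimensional. In general, argue as follows. $K_\Theta$ is invariant under the backward shift $M_z^*$. If it were finite dimensional, $M_z^*|_{K_\Theta}$ would have an eigenvector, and the eigenvectors of $M_z^*$ are the kernels $k_a$ with $|a|<1$. Quotienting out one such kernel and repeating by induction on dimension (or using the Beurling-type identification of finite-dimensional backward-shift invariant subspaces as rational model spaces) shows that $\Theta$ divides, and is divided by, a finite Blaschke product. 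Hence $\Theta$ is a finite Blaschke product.

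This classical model-space dimension count is the only substantive step. I would cite it (for instance from Nikolski or Garcia--Mashreghi--Ross) rather than reprove it, giving the sketch above only for completeness.

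\textbf{Index.} When the conditions hold,
\[
\operatorname{ind}(W_\xi)=\dim\ker W_\xi-\dim\ker W_\xi^*=0-\dim K_\Theta=-d.
\]
Equivalently, $I-W_\xi W_\xi^*$ is a projection of rank $d$.
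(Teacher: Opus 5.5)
Your proposal is correct and follows essentially the same route as the paper. Both arguments identify $\ker W_\xi^*$ with the model space $H^2(\D)\ominus\Theta_\xi H^2(\D)$ via Proposition~\ref{prop:kernel_adjoint}, use that $I-W_\xi W_\xi^*$ is the projection onto $\ker W_\xi^*$, and reduce everything to the classical fact that this model space is finite-dimensional exactly when $\Theta_\xi$ is a finite Blaschke product, with dimension equal to its degree; the only difference is that the paper cites this fact, whereas you also sketch its proof.
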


\begin{proof}
Let $\Theta:=U_1\xi.$
Since $\xi$ is an isometric scalar symbol, Theorem~\ref{thm:operator_characterization_symbol}
implies that $\Theta$ is an inner function.
By Proposition \ref{prop:kernel_adjoint}, 
$\ker W_\xi^*
=
\mathcal M^\perp
\ominus
\overline{\operatorname{span}}\{S_1^p\xi:p\geq 0\}.$
Applying $U_1$ and using $U_1S_1^p\xi=z^p\Theta,$
we obtain
\[
U_1(\ker W_\xi^*)
=
H^2(\mathbb D)\ominus \Theta H^2(\mathbb D), \qquad \text{and} \qquad \dim\ker W_\xi^*
=
\dim\bigl(H^2(\mathbb D)\ominus \Theta H^2(\mathbb D)\bigr).
\]
Since $W_\xi$ is an isometry, $\ker W_\xi=\{0\}$ and $\operatorname{ran}W_\xi$
is closed. Hence $W_\xi$ is Fredholm if and only if $\ker W_\xi^*$ is
finite-dimensional, which is equivalent to the model space
$H^2(\mathbb D)\ominus \Theta H^2(\mathbb D)$
being finite-dimensional.
Since $\Theta$ is inner, the standard model-space theorem implies that this
is equivalent to $\Theta$ being a finite Blaschke product. Moreover, in that case,
\[
\dim\bigl(H^2(\mathbb D)\ominus \Theta H^2(\mathbb D)\bigr)
=
\deg\Theta.
\]
Thus $W_\xi$ is Fredholm if and only if $\xi$ has finite Blaschke degree.
When this holds, $\dim\ker W_\xi^*=\deg(\xi).$ 
Since $\ker W_\xi=\{0\}$, we get
\[
\operatorname{ind}(W_\xi)
=
\dim\ker W_\xi-\dim\ker W_\xi^*
=
0-\deg(\xi)
=
-\deg(\xi).
\]
Moreover, since $W_\xi$ is an isometry,
$
I-W_\xi W_\xi^*
=
P_{(\operatorname{ran}W_\xi)^\perp}
=
P_{\ker W_\xi^*}.
$
An orthogonal projection is compact if and only if it has finite
rank. 
Therefore,
\[
I-W_\xi W_\xi^*\in\clk(\clf_n^2)
\quad\Longleftrightarrow\quad
\deg(\xi)=\dim\ker W_\xi^*<\infty.
\]
\end{proof}

\section{The boundary quotient and structural consequences}
\label{sec:boundary-quotient}
By Theorem~\ref{thm:index}, finite Blaschke symbols are precisely the
isometric scalar symbols for which the defect projection $I-W_\xi W_\xi^*$
is compact. We first identify the quotient of \(\cla_\xi\) under this
compact-defect hypothesis and then record some structural consequences
for finite Blaschke symbols.

Let $\pi:\clb(\clf_n^2)\longrightarrow
\clb(\clf_n^2)/\clk(\clf_n^2)$
denote the Calkin quotient map, and set
$s_i=\pi(S_i)$, $w=\pi(W_\xi)$. We denote by
$\clq(O_n)$ the boundary quotient algebra of the odometer semigroup, that is,
the universal $C^*$-algebra generated by a unitary $u$ and isometries
$t_1,\ldots,t_n$ satisfying
\[
t_i^*t_j=\delta_{ij}1,\qquad
\sum_{i=1}^n t_it_i^*=1,
\]
and the odometer covariance relations
\[
ut_i=t_{i+1},\qquad 1\leq i<n,
\qquad\text{and}\qquad
ut_n=t_1u.
\]
It is known that \(\clq(O_n)\) is canonically isomorphic to Cuntz's
\(n\)-adic ring \(C^*\)-algebra \(\mathcal Q_n\); in particular, it is
simple, nuclear, and purely infinite. See \cite{li2019}.

\begin{theorem}\label{thm:quotient-isomorphism}
Assume that $W_\xi$ is an isometry and that $I-W_\xi W_\xi^*$ is compact.
Then there exists a unique $*$-isomorphism
\[
\rho_\xi:\clq(O_n)\overset{\cong}{\longrightarrow}\pi(\cla_\xi)\cong\cla_\xi/\clk(\clf_n^2),
\qquad
\rho_\xi(u)=w,\quad \rho_\xi(t_i)=s_i\ (1\le i\le n).
\]
\end{theorem}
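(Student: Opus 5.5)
First I verify that $w,s_1,\ldots,s_n$ satisfy the defining relations of $\clq(O_n)$ in the Calkin algebra. The $S_i$ are isometries with pairwise orthogonal ranges, so $s_i^*s_j=\delta_{ij}1$. Moreover
\[
1-\sum_{i=1}^n s_is_i^*=\pi(P_\Omega)=0,
\]
since $P_\Omega$ has rank one. Next, $W_\xi$ is an isometry and $I-W_\xi W_\xi^*$ is compact, so $w^*w=1=ww^*$ and $w$ is unitary. The covariance relations $W_\xi S_i=S_{i+1}$ for $1\le i<n$ and $W_\xi S_n=S_1W_\xi$ hold exactly in $\clb(\clf_n^2)$, so they persist under $\pi$. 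The universal property of $\clq(O_n)$ then yields a $*$-homomorphism $\rho_\xi:\clq(O_n)\to\clq(\clf_n^2)$ with $\rho_\xi(u)=w$ and $\rho_\xi(t_i)=s_i$.

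The image of $\rho_\xi$ is a $C^*$-algebra containing the generators $\pi(S_i)$ and $\pi(W_\xi)$ of $\pi(\cla_\xi)$, and it is contained in $\pi(\cla_\xi)$. Hence $\rho_\xi$ maps onto $\pi(\cla_\xi)$. Uniqueness is automatic, because a $*$-homomorphism is determined by its values on generators. By Proposition~\ref{prop:compact-containment}, $\clk(\clf_n^2)\subseteq\cla_\xi$, so $\pi(\cla_\xi)\cong\cla_\xi/(\cla_\xi\cap\clk(\clf_n^2))=\cla_\xi/\clk(\clf_n^2)$. This gives the second identification in the statement.

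It remains to prove injectivity. Here I invoke the fact cited from \cite{li2019} that $\clq(O_n)\cong\mathcal Q_n$ is simple. The kernel of $\rho_\xi$ is a closed two-sided ideal, so it is either $0$ or all of $\clq(O_n)$. It cannot be everything, since $\rho_\xi(1)=\pi(I)\neq 0$: $\clf_n^2$ is infinite-dimensional, so $I$ is not compact. Therefore $\rho_\xi$ is injective, and hence an isomorphism onto $\pi(\cla_\xi)$.

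The only genuinely nontrivial input is the simplicity of $\clq(O_n)$. It also requires care that the universal algebra defined here, with relations $t_i^*t_j=\delta_{ij}$, $\sum t_it_i^*=1$, $u$ unitary and the odometer covariance, is exactly the boundary quotient identified with $\mathcal Q_n$ in \cite{li2019}. If that identification needed checking, I would match generators directly: put $u\mapsto u_{\mathcal Q_n}$ and $t_i\mapsto s_2$-type isometries twisted by $u^{i-1}$, i.e.\ $t_i=u^{i-1}t_1$ with $t_1$ playing the role of the Cuntz isometry. Then Cuntz's relations $s\,u=u^n s$ and $\sum_{k} u^k s s^* u^{-k}=1$ correspond to the relations above. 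In particular $u^nt_1=u\,t_n=t_1u$, and $\sum t_it_i^*=1$ gives the second relation. This shows both presentations define the same universal algebra, and simplicity of $\mathcal Q_n$ then finishes the argument.
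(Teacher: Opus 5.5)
Your proposal is correct and follows essentially the same route as the paper: you verify the relations in the Calkin algebra, apply the universal property, get surjectivity onto $\pi(\cla_\xi)=C^*(s_1,\ldots,s_n,w)$, use Proposition~\ref{prop:compact-containment} to identify $\pi(\cla_\xi)\cong\cla_\xi/\clk(\clf_n^2)$, and deduce injectivity from the simplicity of $\clq(O_n)$. Your remarks on uniqueness and on matching the odometer presentation with Cuntz's relations $su=u^ns$, $\sum_k u^kss^*u^{-k}=1$ via $t_i=u^{i-1}t_1$ are correct; the paper leaves both implicit, relying on the citation \cite{li2019}.
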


\begin{proof}
    Applying $\pi$ to $S_i^*S_j=\delta_{ij}I$ gives $s_i^*s_j=\delta_{ij}1$.
Since $I-\sum_{i=1}^n S_iS_i^*=P_\Omega$ is compact, $\pi(P_\Omega)=0$, so
$\sum_{i=1}^n s_is_i^*=1.$ As $W_\xi$ is an isometry, $w^*w=1$; since
$I-W_\xi W_\xi^*$ is compact by hypothesis, $ww^*=1$ as well, so $w$ is
unitary. Finally, applying $\pi$ to the odometer covariance relations
$W_\xi S_i=S_{i+1}$ ($1\le i<n$) and $W_\xi S_n=S_1W_\xi$ gives
$ws_i=s_{i+1}$ and $ws_n=s_1w$.

By the universal property of $\clq(O_n)$, the assignment $u\mapsto w$, $t_i\mapsto s_i$ extends to a unital $*$-homomorphism $\rho_\xi:\clq(O_n)\to\pi(\cla_\xi)$. Its range contains $w,s_1,\ldots,s_n$, and since
\[
\pi(\cla_\xi)=C^*(\pi(S_1),\ldots,\pi(S_n),\pi(W_\xi))=C^*(s_1,\ldots,s_n,w),
\]
$\rho_\xi$ is surjective. By Proposition~\ref{prop:compact-containment}, $\ker(\pi|_{\cla_\xi})=\clk(\clf_n^2)$, so $\pi(\cla_\xi)\cong\cla_\xi/\clk(\clf_n^2)$.
Since \(\clq(O_n)\) is simple and \(\rho_\xi\) is a nonzero unital
homomorphism, \(\rho_\xi\) is injective.
Thus $\rho_\xi$ is a $*$-isomorphism.
\end{proof}

Combining Theorems~\ref{thm:index} and
\ref{thm:quotient-isomorphism}, we obtain an essential extension of
\(\clq(O_n)\) by the compact operators for every finite Blaschke symbol.

\begin{corollary}\label{cor:finite-degree-extension}
Let $\xi$ be a finite Blaschke symbol of degree $d\ge 0$. Then there is a short exact sequence
\[
0\longrightarrow \clk(\clf_n^2)
\longrightarrow \cla_\xi
\overset{\Phi_\xi}{\longrightarrow}
\clq(O_n)
\longrightarrow 0,
\qquad
\Phi_\xi:=\rho_\xi^{-1}\circ\pi|_{\cla_\xi},
\]
exhibiting $\cla_\xi$ as an essential extension of $\clq(O_n)$ by the compact operators.
\end{corollary}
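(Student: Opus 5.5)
The corollary follows by assembling Theorem~\ref{thm:index}, Theorem~\ref{thm:quotient-isomorphism}, Proposition~\ref{prop:compact-containment} and Corollary~\ref{cor:irreducible-essential}.

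First we check the hypotheses of Theorem~\ref{thm:quotient-isomorphism}. Since $\xi$ is a finite Blaschke symbol, it is by definition an isometric scalar symbol, so $W_\xi$ is an isometry. The implication (3)$\Rightarrow$(2) of Theorem~\ref{thm:index} then shows that $I-W_\xi W_\xi^*$ is compact. Note that for $d=0$ it is in fact zero, since $W_\xi$ is unitary by Theorem~\ref{thm:operator_characterization_symbol}. Theorem~\ref{thm:quotient-isomorphism} therefore provides the $*$-isomorphism $\rho_\xi:\clq(O_n)\to\pi(\cla_\xi)$. Consequently
\[
\Phi_\xi=\rho_\xi^{-1}\circ\pi|_{\cla_\xi}:\cla_\xi\longrightarrow\clq(O_n)
\]
is a well-defined $*$-homomorphism. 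It is surjective, because $\pi|_{\cla_\xi}$ maps onto $\pi(\cla_\xi)$ and $\rho_\xi^{-1}$ is bijective.

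Next we identify the kernel. Since $\rho_\xi^{-1}$ is injective, $\ker\Phi_\xi=\ker(\pi|_{\cla_\xi})=\cla_\xi\cap\clk(\clf_n^2)$. By Proposition~\ref{prop:compact-containment} this intersection equals $\clk(\clf_n^2)$. The inclusion $\clk(\clf_n^2)\hookrightarrow\cla_\xi$ is injective and has image exactly $\ker\Phi_\xi$, which gives exactness of
\[
0\longrightarrow\clk(\clf_n^2)\longrightarrow\cla_\xi\overset{\Phi_\xi}{\longrightarrow}\clq(O_n)\longrightarrow 0.
\]

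Finally, the extension is essential because Corollary~\ref{cor:irreducible-essential} shows that $\clk(\clf_n^2)$ is an essential ideal of $\cla_\xi$. Equivalently, the Busby map $\clq(O_n)\to\clq(\clf_n^2)$ is injective, which is immediate here since it is just $\rho_\xi$ followed by the inclusion $\pi(\cla_\xi)\subseteq\clq(\clf_n^2)$.

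No step is a genuine obstacle. The only point needing care is that the defining relations used for $\clq(O_n)$ require $w$ to be unitary. That is exactly where the finite Blaschke hypothesis enters, through Theorem~\ref{thm:index}, and it is valid uniformly in $d$, including the case $d=0$.
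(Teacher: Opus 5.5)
Your proposal is correct and follows essentially the same route as the paper: Theorem~\ref{thm:index} gives compactness of $I-W_\xi W_\xi^*$, Theorem~\ref{thm:quotient-isomorphism} supplies $\rho_\xi$, Proposition~\ref{prop:compact-containment} identifies the kernel, and Corollary~\ref{cor:irreducible-essential} gives essentiality. Your extra observations, that the defect vanishes when $d=0$ and that the Busby map is $\rho_\xi$ followed by the inclusion into the Calkin algebra, are accurate but not needed.
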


\begin{proof}
By Theorem~\ref{thm:index}, $I - W_\xi W_\xi^* \in \clk(\clf_n^2)$, so by Theorem~\ref{thm:quotient-isomorphism} there is a $*$-isomorphism $\rho_\xi:\clq(O_n)\to\pi(\cla_\xi)$. Hence
\[
\Phi_\xi:=\rho_\xi^{-1}\circ\pi|_{\cla_\xi}:\cla_\xi\longrightarrow\clq(O_n)
\]
is a surjective $*$-homomorphism. Moreover,
\[
\ker\Phi_\xi
=
\ker(\pi|_{\cla_\xi})
=
\cla_\xi\cap\clk(\clf_n^2)
=
\clk(\clf_n^2),
\]
where the final equality follows from
Proposition~\ref{prop:compact-containment}. Thus the displayed sequence
is exact. Finally, Corollary~\ref{cor:irreducible-essential} shows that
\(\clk(\clf_n^2)\) is an essential ideal of \(\cla_\xi\).
\end{proof}

\begin{corollary}
For every finite Blaschke symbol $\xi$, the $C^*$-algebra $\cla_\xi$ is
nuclear, and hence exact.
\end{corollary}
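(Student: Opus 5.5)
The plan is to obtain nuclearity of $\cla_\xi$ from the extension in Corollary~\ref{cor:finite-degree-extension}, using the fact that nuclearity passes to extensions. The statement applies to every finite Blaschke symbol $\xi$ of degree $d\ge 0$. For such $\xi$, Corollary~\ref{cor:finite-degree-extension} gives the short exact sequence
\[
0\longrightarrow \clk(\clf_n^2)\longrightarrow \cla_\xi\overset{\Phi_\xi}{\longrightarrow}\clq(O_n)\longrightarrow 0 .
\]
We then verify that the ideal and the quotient are both nuclear.

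The ideal $\clk(\clf_n^2)$ is nuclear. It is the inductive limit of the finite-dimensional algebras $M_k(\mathbb C)$, obtained as compressions to the spans of the basis vectors $e_\mu$ with $|\mu|\le m$, and nuclearity is preserved under inductive limits. The quotient $\clq(O_n)$ is nuclear because it is canonically isomorphic to Cuntz's $n$-adic ring algebra $\mathcal Q_n$, which is nuclear by \cite{li2019}, as recalled just before Theorem~\ref{thm:quotient-isomorphism}.

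Next we invoke the standard permanence theorem: if $0\to\mathcal I\to\mathcal A\to\mathcal B\to 0$ is exact with $\mathcal I$ and $\mathcal B$ nuclear, then $\mathcal A$ is nuclear (Choi--Effros; see \cite{blackadar1998}). Applying it to the sequence above shows that $\cla_\xi$ is nuclear.

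Exactness then follows because every nuclear $C^*$-algebra is exact: the minimal tensor product with a nuclear algebra coincides with the maximal one, and the maximal tensor product preserves short exact sequences.

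There is no genuine obstacle here; the argument is an assembly of known results. The only point needing care is that the nuclearity of the quotient relies on the identification of $\clq(O_n)$ with $\mathcal Q_n$ from \cite{li2019}, together with the isomorphism $\rho_\xi$ of Theorem~\ref{thm:quotient-isomorphism}. That isomorphism needs only the finite Blaschke hypothesis, through Theorem~\ref{thm:index}, and so it covers the degree-zero case as well.
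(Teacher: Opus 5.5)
Your proposal is correct and follows essentially the same route as the paper. Both arguments apply the extension of Corollary~\ref{cor:finite-degree-extension}, use nuclearity of $\clk(\clf_n^2)$ and of $\clq(O_n)\cong\mathcal Q_n$, invoke permanence of nuclearity under extensions, and conclude exactness from nuclearity.
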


\begin{proof}
Consider the short exact sequence from Corollary~\ref{cor:finite-degree-extension}.
The algebra $\clk(\clf_n^2)$ of compact operators is nuclear. Moreover,
$\clq(O_n)$ is canonically isomorphic to Cuntz's $n$-adic ring
$C^*$-algebra $\mathcal Q_n$, which is nuclear; see, for example,
\cite{li2019}.
Since nuclearity is preserved under extensions, \(\cla_\xi\) is nuclear.
In particular, \(\cla_\xi\) is exact.
\end{proof}

\begin{proposition}\label{prop:ideal-lattice}
Let \(\xi\) be a finite Blaschke symbol. Then the closed
two-sided ideals of $\cla_\xi$ are exactly
\[
\{0\},\qquad \clk(\clf_n^2),\qquad \cla_\xi.
\]
\end{proposition}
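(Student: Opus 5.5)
Let \(J\) be a closed two-sided ideal of \(\cla_\xi\). The plan is to split into cases according to whether \(J\) is zero, and if not, to pass to the Calkin image and use simplicity of the boundary quotient. First, \(\clk(\clf_n^2)\) is a minimal nonzero ideal of \(\cla_\xi\). Suppose \(J\neq\{0\}\). By Corollary~\ref{cor:irreducible-essential}, \(\clk(\clf_n^2)\) is essential, so \(J\cap\clk(\clf_n^2)\) is a nonzero closed ideal of \(\cla_\xi\). It is contained in \(\clk(\clf_n^2)\), and it is invariant under multiplication by \(\clk(\clf_n^2)\subseteq\cla_\xi\) (Proposition~\ref{prop:compact-containment}). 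Hence it is a nonzero closed ideal of the simple algebra \(\clk(\clf_n^2)\). Therefore \(J\cap\clk(\clf_n^2)=\clk(\clf_n^2)\), that is, \(\clk(\clf_n^2)\subseteq J\).

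Next, consider the quotient map \(\Phi_\xi:\cla_\xi\to\clq(O_n)\) of Corollary~\ref{cor:finite-degree-extension}, whose kernel is \(\clk(\clf_n^2)\). Since \(\Phi_\xi\) is surjective, \(\Phi_\xi(J)\) is a closed two-sided ideal of \(\clq(O_n)\). It is closed because the image of a closed ideal under a surjective \(*\)-homomorphism is closed; equivalently, \(J/\clk(\clf_n^2)\) is a closed ideal of \(\cla_\xi/\clk(\clf_n^2)\cong\clq(O_n)\). As recalled before Theorem~\ref{thm:quotient-isomorphism}, \(\clq(O_n)\cong\mathcal Q_n\) is simple, so \(\Phi_\xi(J)\) is \(\{0\}\) or all of \(\clq(O_n)\).

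In the first case \(J\subseteq\ker\Phi_\xi=\clk(\clf_n^2)\). Together with the reverse inclusion from the first step, this gives \(J=\clk(\clf_n^2)\). In the second case, since \(J\supseteq\ker\Phi_\xi\), the correspondence theorem gives \(J=\Phi_\xi^{-1}(\Phi_\xi(J))=\cla_\xi\). Finally, the three listed ideals are genuinely distinct. \(\clk(\clf_n^2)\neq\{0\}\) trivially. \(\clk(\clf_n^2)\neq\cla_\xi\) because \(I\in\cla_\xi\) is not compact on the infinite-dimensional space \(\clf_n^2\).

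There is no real obstacle here; the only nontrivial input is the simplicity of \(\clq(O_n)\), which the paper takes from \cite{li2019}. The one point needing care is the minimality step, namely that ideals of \(\cla_\xi\) meet \(\clk\) in ideals of \(\clk\). This is immediate because \(\clk\subseteq\cla_\xi\).
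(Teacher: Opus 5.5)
Your proposal is correct and follows the paper's proof essentially step for step. Essentiality plus simplicity of $\clk(\clf_n^2)$ forces $\clk(\clf_n^2)\subseteq J$ for every nonzero $J$, and simplicity of $\clq(O_n)$, transported through $\Phi_\xi$, then leaves only $J=\clk(\clf_n^2)$ or $J=\cla_\xi$. Your extra checks (that $J\cap\clk(\clf_n^2)$ is an ideal of $\clk(\clf_n^2)$, that $\Phi_\xi(J)$ is closed, and that the three ideals are distinct) are correct elaborations of points the paper leaves implicit.
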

\begin{proof}
Let $J\subseteq\cla_\xi$ be a nonzero closed ideal. By
Corollary~\ref{cor:finite-degree-extension}, $\clk(\clf_n^2)$ is an
essential ideal of $\cla_\xi$, so $J\cap\clk(\clf_n^2)\neq\{0\}$. Since
$\clf_n^2$ is infinite-dimensional, $\clk(\clf_n^2)$ is
simple as a (non-unital) $C^*$-algebra, so its only closed ideals are
$\{0\}$ and $\clk(\clf_n^2)$ itself; hence
\[
J\cap\clk(\clf_n^2)=\clk(\clf_n^2),\qquad\text{i.e.}\qquad
\clk(\clf_n^2)\subseteq J.
\]
Thus $J/\clk(\clf_n^2)$ is a closed ideal of
$\cla_\xi/\clk(\clf_n^2)$. The homomorphism \(\Phi_\xi\) induces a \(*\)-isomorphism
$$
\widetilde{\Phi}_\xi:
\cla_\xi/\clk(\clf_n^2)
\overset{\cong}{\longrightarrow}
\clq(O_n).
$$
Under this isomorphism, $J/\clk(\clf_n^2)$
corresponds to a closed ideal of $\clq(O_n)$. 
Since $\clq(O_n)$ is simple, this ideal is either $\{0\}$ or all of
$\clq(O_n)$, giving $J=\clk(\clf_n^2)$ or $J=\cla_\xi$ respectively. Together with the zero ideal, these are all the closed two-sided ideals
of \(\cla_\xi\).
\end{proof}

\section{$K$-theory and extension invariants of $\cla_\xi$} \label{sec: K theory}
Let \(\xi\) be a finite Blaschke symbol. By
Corollary~\ref{cor:finite-degree-extension}, there is an essential
extension
\[
0\longrightarrow\clk(\clf_n^2)
\overset{i}{\longrightarrow}\cla_\xi
\overset{\Phi_\xi}{\longrightarrow}\clq(O_n)
\longrightarrow0.
\]
We use the identification
\[
\left(
K_0(\clq(O_n)),
[1_{\clq(O_n)}]_0,
K_1(\clq(O_n))
\right)
\cong
\left(
\Z\oplus\Z_{n-1},
(0,\overline{1}),
\Z
\right);
\]
see \cite{BOS2018}. The boundary-map convention is the one fixed in
Subsection~\ref{subsec:k-theory-conventions}.

Before computing the connecting map for a general finite Blaschke
symbol, we identify a natural generator of
$K_1(\clq(O_n))$. Let
$\xi_1=e_1$, so that $\Theta_{\xi_1}(z)=z$ and
$\deg(\xi_1)=1$. For the extension associated with
$\cla_{\xi_1}$, we have
$
\Phi_{\xi_1}(W_{\xi_1})=u,
$
where $u\in\clq(O_n)$ is the canonical odometer unitary. Thus
$W_{\xi_1}$ is a Fredholm lift of $u$, and our boundary-map
convention gives
$$
\partial_1^{\xi_1}([u]_1)
=
\operatorname{ind}(W_{\xi_1})
=
-1.
$$
To see that $[u]_1$ is a generator, let $g$ be a generator of
$K_1(\clq(O_n))\cong\mathbb Z$ and write
$[u]_1=mg$ for some $m\in\mathbb Z$. Then
$$
-1
=
\partial_1^{\xi_1}([u]_1)
=
m\,\partial_1^{\xi_1}(g).
$$
Hence $m=\pm1$, and therefore $[u]_1$ generates
$K_1(\clq(O_n))$. We henceforth identify
$K_1(\clq(O_n))$ with $\mathbb Z$ by declaring
$[u]_1$ to correspond to $1$.

\begin{theorem}\label{thm:k-theory}
Let \(n\geq2\), and let \(\xi\) be a finite Blaschke symbol of degree
\(d\geq1\). Then
\[
K_1(\cla_\xi)=0 \qquad \text{ and } \qquad K_0(\cla_\xi)
\cong
\Z\oplus\Z_{d(n-1)}.
\]
More precisely, $\operatorname{Tor}\bigl(K_0(\cla_\xi)\bigr)
=
\bigl\langle [1_{\cla_\xi}]_0\bigr\rangle
\cong
\Z_{d(n-1)},$
and the isomorphism $K_0(\cla_\xi)
\cong
\Z\oplus\Z_{d(n-1)}$
may be chosen so that
\[
[1_{\cla_\xi}]_0
\longmapsto
(0,\overline{1}).
\]
\end{theorem}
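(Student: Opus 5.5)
We will run the six-term exact sequence for the essential extension of Corollary~\ref{cor:finite-degree-extension}. We use $K_0(\clk(\clf_n^2))\cong\Z$, generated by the class of a rank-one projection, and $K_1(\clk(\clf_n^2))=0$. We also use the identification $K_0(\clq(O_n))\cong\Z\oplus\Z_{n-1}$ with $[1]_0=(0,\overline1)$, and $K_1(\clq(O_n))\cong\Z$ generated by $[u]_1$. The sequence then reduces to
\[
0\longrightarrow K_1(\cla_\xi)\overset{(\Phi_\xi)_*}{\longrightarrow}\Z\overset{\partial_1}{\longrightarrow}\Z\overset{i_*}{\longrightarrow}K_0(\cla_\xi)\overset{(\Phi_\xi)_*}{\longrightarrow}\Z\oplus\Z_{n-1}\overset{\partial_0}{\longrightarrow}0 .
\]
The map $\partial_0$ lands in $K_1(\clk)=0$, so it vanishes. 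The first step is to compute $\partial_1$. Since $\Phi_\xi(W_\xi)=u$ and $W_\xi$ is Fredholm, the stated convention together with Theorem~\ref{thm:index} gives $\partial_1([u]_1)=\operatorname{ind}(W_\xi)=-d$. So $\partial_1$ is multiplication by $-d$. Because $d\ge1$, this map is injective, and therefore $K_1(\cla_\xi)=0$.

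The second step is to analyse the remaining short exact sequence
\[
0\longrightarrow \Z_d\overset{i_*}{\longrightarrow}K_0(\cla_\xi)\overset{(\Phi_\xi)_*}{\longrightarrow}\Z\oplus\Z_{n-1}\longrightarrow0 .
\]
Here $\Z_d=\Z/d\Z$ is generated by $i_*[P_\Omega]_0$, the class of the rank-one vacuum projection. Since the $\Z$ summand is free, we can split off a copy of $\Z$ by lifting a generator. This gives $K_0(\cla_\xi)\cong\Z\oplus T$, where $T=(\Phi_\xi)_*^{-1}(0\oplus\Z_{n-1})$ is an extension of $\Z_{n-1}$ by $\Z_d$. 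The torsion subgroup of $K_0(\cla_\xi)$ is exactly $T$, because $\Z$ is torsion-free and $T$ is finite. What remains is to identify $T$ as the cyclic group $\Z_{d(n-1)}$ generated by $[1_{\cla_\xi}]_0$.

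This identification is the main obstacle, since abstractly the extension could be $\Z_d\oplus\Z_{n-1}$. We resolve it with explicit relations in $K_0(\cla_\xi)$. From $\sum_i S_iS_i^*+P_\Omega=I$ and $S_i^*S_i=I$, we get
\[
[1]_0=n[1]_0+[P_\Omega]_0,\qquad\text{that is,}\qquad (n-1)[1]_0=-[P_\Omega]_0 .
\]
So $[P_\Omega]_0$ lies in the subgroup generated by $[1]_0$. The class $[1]_0$ lies in $T$, since it maps to $(0,\overline1)$, which has order $n-1$. We now show that $\langle[1]_0\rangle=T$. The image of $\langle[1]_0\rangle$ under $(\Phi_\xi)_*$ is all of $0\oplus\Z_{n-1}$. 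Moreover, $\langle[1]_0\rangle$ contains $[P_\Omega]_0$, so it contains all of $i_*(\Z_d)=\ker(\Phi_\xi)_*|_T$. Hence $\langle[1]_0\rangle=T$, and $T$ is cyclic with $|T|=d(n-1)$, so $T\cong\Z_{d(n-1)}$.

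As a consistency check, we also use $W_\xi^*W_\xi=I$ together with $I-W_\xi W_\xi^*$ being a projection of rank $d$. This gives $[1]_0=[1]_0+d[P_\Omega]_0$, so $d[P_\Omega]_0=0$, consistent with the order $d$ of $i_*[P_\Omega]_0$. Finally, we choose the splitting $K_0(\cla_\xi)\cong\Z\oplus T$ and the isomorphism $T\cong\Z_{d(n-1)}$ so that $[1_{\cla_\xi}]_0\mapsto(0,\overline1)$. This proves every assertion of the theorem.
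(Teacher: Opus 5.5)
Your proposal is correct and follows the paper's proof. Both arguments use the same six-term sequence, identify $\partial_1$ as multiplication by $-d$ via $\operatorname{ind}(W_\xi)$, split off $\Z$ through $\ker\bigl(\operatorname{pr}_{\Z}\circ(\Phi_\xi)_*\bigr)$, and rely on the key relation $i_*[P_\Omega]_0=-(n-1)[1]_0$. The only cosmetic difference is that you show $\langle[1]_0\rangle=T$ directly, since it surjects onto $\Z_{n-1}$ and contains the kernel $i_*(\Z_d)$, whereas the paper first proves by divisibility that $[1]_0$ has exact order $d(n-1)$ and then compares with $|G|$.
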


\begin{proof}
By Corollary~\ref{cor:finite-degree-extension}, there is a short exact sequence
\[
0
\longrightarrow
\clk(\clf_n^2)
\overset{i}{\longrightarrow}
\cla_\xi
\overset{\Phi_\xi}{\longrightarrow}
\clq(O_n)
\longrightarrow
0.
\]
The associated six-term exact sequence is
\[
\begin{tikzcd}
K_0(\clk(\clf_n^2))
    \arrow[r, "i_*"]
&
K_0(\cla_\xi)
    \arrow[r, "\Phi_{\xi *}"]
&
K_0(\clq(O_n))
    \arrow[d, "\partial_0"]
\\
K_1(\clq(O_n))
    \arrow[u, "\partial_1"]
&
K_1(\cla_\xi)
    \arrow[l, "\Phi_{\xi *}"']
&
K_1(\clk(\clf_n^2))
    \arrow[l, "i_*"']
\end{tikzcd}.
\]
Since $\clf_n^2$ is infinite-dimensional and separable,
\[
K_0(\clk(\clf_n^2))\cong\Z,
\qquad
K_1(\clk(\clf_n^2))=0,
\]
where the class $[P_\Omega]_0$ of the rank-one vacuum projection is
identified with $1\in\Z$.
We also use the known $K$-theory of the odometer boundary quotient:
\[
\left(
K_0(\clq(O_n)),
[1_{\clq(O_n)}]_0,
K_1(\clq(O_n))
\right)
\cong
\left(
\Z\oplus\Z_{n-1},
(0,\overline{1}),
\Z
\right).
\]
In particular, the class $[1_{\clq(O_n)}]_0$ has exact order $n-1$.
Let $u\in\clq(O_n)$ be the canonical odometer unitary. By the
preceding observation, $[u]_1$ generates
$K_1(\clq(O_n))\cong\mathbb Z$. Since
$
\Phi_\xi(W_\xi)=u,
$
the operator $W_\xi$ is a Fredholm lift of $u$. Therefore,
$$
\partial_1([u]_1)
=
\operatorname{ind}(W_\xi)
=
-d.
$$
Hence
$
\partial_1:\mathbb Z\longrightarrow\mathbb Z
$
is multiplication by $-d$.
Since $d\geq 1$, the homomorphism $\partial_1$ is injective. By
exactness at $K_1(\clq(O_n))$,
\[
\operatorname{im}
\bigl(
\Phi_{\xi *}:K_1(\cla_\xi)\to K_1(\clq(O_n))
\bigr)
=
\ker\partial_1
=
\{0\}.
\]
On the other hand, exactness at $K_1(\cla_\xi)$ and the equality
$K_1(\clk(\clf_n^2))=0$ show that $\Phi_{\xi *}$ is injective on
$K_1(\cla_\xi)$. Therefore
\[
K_1(\cla_\xi)=0.
\]
Exactness at $K_0(\clk(\clf_n^2))$ gives $\ker i_*
=
\operatorname{im}\partial_1
=
d\Z.$
Hence
\[
\operatorname{im}i_*
\cong
\Z/d\Z
=
\Z_d.
\]
Moreover, $y:=i_*([P_\Omega]_0)=i_*(1)$
generates $\operatorname{im}i_*$. Since
\[
my=0
\quad\Longleftrightarrow\quad
i_*(m)=0
\quad\Longleftrightarrow\quad
m\in d\Z,
\]
the smallest positive integer annihilating $y$ is $d$. Therefore $y$
has exact order $d$.

Because $K_1(\clk(\clf_n^2))=0$, the connecting map $\partial_0:
K_0(\clq(O_n))
\longrightarrow
K_1(\clk(\clf_n^2))$
is zero. Exactness therefore implies that $\Phi_{\xi *}:
K_0(\cla_\xi)
\longrightarrow
K_0(\clq(O_n))$
is surjective. We consequently obtain a short exact sequence
\[
0
\longrightarrow
\Z_d
\longrightarrow
K_0(\cla_\xi)
\overset{\Phi_{\xi *}}{\longrightarrow}
\Z\oplus\Z_{n-1}
\longrightarrow
0.
\]
We next separate the free summand. Define
\[
r
:=
\operatorname{pr}_{\Z}\circ\Phi_{\xi *}:
K_0(\cla_\xi)\longrightarrow\Z,
\]
where $\operatorname{pr}_{\Z}:
\Z\oplus\Z_{n-1}\longrightarrow\Z$
is projection onto the first summand. Since $\Phi_{\xi *}$ and
$\operatorname{pr}_{\Z}$ are surjective, so is $r$. Hence 
$$0\longrightarrow\ker r\longrightarrow K_0(\cla_\xi)\overset{r}{\longrightarrow}\Z\longrightarrow 0$$
is a short exact sequence. Since $\Z$ is free and therefore projective, this sequence splits. Thus $K_0(\cla_\xi)\cong\ker r\oplus\Z.$ Setting $G:=\ker r$, we obtain
\[
K_0(\cla_\xi)
\cong
\Z\oplus G.
\]
The restriction of $\Phi_{\xi *}$ to $G$ has range
$\{0\}\oplus\Z_{n-1}$. Indeed, if
$(0,\overline m)\in\{0\}\oplus\Z_{n-1}$, the surjectivity of
$\Phi_{\xi *}$ provides $a\in K_0(\cla_\xi)$ such that
\[
\Phi_{\xi *}(a)=(0,\overline m).
\]
Then $r(a)=0$, and hence $a\in G$. Moreover,
$\ker\bigl(\Phi_{\xi *}|_G\bigr)
=
\ker\Phi_{\xi *}
=
\operatorname{im}i_*
\cong
\Z_d.$
Thus $G$ fits into the short exact sequence
\[
0
\longrightarrow
\Z_d
\longrightarrow
G
\longrightarrow
\Z_{n-1}
\longrightarrow
0.
\]
In particular, $G$ is finite and $
|G|=d(n-1).$
It remains to determine the structure of $G$. Set
\[
x:=[1_{\cla_\xi}]_0
\qquad\text{and}\qquad
y:=i_*\bigl([P_\Omega]_0\bigr).
\]
Since $P_\Omega
=
I-\sum_{j=1}^nS_jS_j^*,$
we have $y
=
x-\sum_{j=1}^n[S_jS_j^*]_0$ in $K_0(\cla_\xi)$.
For each $1\leq j\leq n$, the operator $S_j$ is an isometry, and
therefore
$S_jS_j^*$ is Murray--von Neumann equivalent to $I$, with
implementing partial isometry $S_j$. Consequently,
\[
[S_jS_j^*]_0
=
[S_j^*S_j]_0
=
[1_{\cla_\xi}]_0
=
x.
\]
It follows that $y
=
x-nx
=
-(n-1)x.$
Therefore
\begin{equation}\label{eq:vacuum-unit-relation}
i_*([P_\Omega]_0)
=
-(n-1)[1_{\cla_\xi}]_0.
\end{equation}
Since $y$ has exact
order $d$,
we obtain $d(n-1)x
=
-dy
=
0.$
Thus the order of $x$ divides $d(n-1)$.
Conversely, suppose that $kx=0$
for some $k\in\Z$. Applying $\Phi_{\xi *}$ gives
\[
0
=
\Phi_{\xi *}(kx)
=
k[1_{\clq(O_n)}]_0.
\]
The class $[1_{\clq(O_n)}]_0$ has exact order $n-1$, and hence $n-1\mid k.$
Write $k=(n-1)\ell$
for some $\ell\in\Z$. Then
\[
0
=
kx
=
\ell(n-1)x
=
-\ell y.
\]
Since $y$ has exact order $d$, it follows that
$d\mid\ell.$
Therefore $d(n-1)\mid k.$
This proves that $x=[1_{\cla_\xi}]_0$ has exact order $d(n-1)$.

Finally $\Phi_{\xi *}(x)
=
[1_{\clq(O_n)}]_0
=
(0,\overline{1}),$
so $r(x)
=
\operatorname{pr}_{\Z}\bigl(\Phi_{\xi *}(x)\bigr)
=
0.$
Hence $x\in G$. The cyclic subgroup $\langle x\rangle$ of $G$ has
order $d(n-1)$, while $G$ itself has order $d(n-1)$. It follows that
\[
G
=
\langle x\rangle
\cong
\Z_{d(n-1)}.
\]
Consequently,
\[
K_0(\cla_\xi)
\cong
\Z\oplus\Z_{d(n-1)} \quad \text{and} \quad
\operatorname{Tor}\bigl(K_0(\cla_\xi)\bigr)
=
\langle[1_{\cla_\xi}]_0\rangle
\cong
\Z_{d(n-1)}.
\]
Choosing a generator of the free summand arising from a splitting of
$r$, the resulting isomorphism may be arranged so that
\[
[1_{\cla_\xi}]_0
\longmapsto
(0,\overline{1}).
\]
\end{proof}
\begin{corollary}\label{cor:n-2}
Let \(n=2\), and let \(\xi\) be a finite Blaschke symbol of degree
\(d\geq1\). Then
\[
K_1(\cla_\xi)=0,
\qquad
K_0(\cla_\xi)\cong\Z\oplus\Z_d.
\]
\end{corollary}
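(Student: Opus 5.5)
This is the specialization $n=2$ of Theorem~\ref{thm:k-theory}, and the plan is simply to substitute and simplify. Since the hypotheses are exactly those of Theorem~\ref{thm:k-theory} with $n=2$ (a finite Blaschke symbol of degree $d\geq1$), that theorem applies verbatim. It gives
\[
K_1(\cla_\xi)=0
\qquad\text{and}\qquad
K_0(\cla_\xi)\cong\Z\oplus\Z_{d(n-1)}.
\]
Setting $n-1=1$ yields $\Z_{d(n-1)}=\Z_d$, which is the claimed statement.

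As a sanity check on the degenerate ingredients, note that for $n=2$ the quotient has
\[
K_0(\clq(O_2))\cong\Z\oplus\Z_1=\Z,
\]
with $[1_{\clq(O_2)}]_0=0$, which is consistent with $\clq(O_2)\cong\mathcal Q_2$. The proof of Theorem~\ref{thm:k-theory} still goes through in this case, for the following reasons.
\begin{enumerate}
\item The order of $[1_{\clq(O_n)}]_0$ is $n-1=1$, so the divisibility step ``$n-1\mid k$'' is vacuous.
\item The relation \eqref{eq:vacuum-unit-relation} becomes $i_*([P_\Omega]_0)=-[1_{\cla_\xi}]_0$. Hence $[1_{\cla_\xi}]_0$ has the same exact order $d$ as $y$.
\item The torsion group $G$ sits in $0\to\Z_d\to G\to\Z_1\to0$, so $G\cong\Z_d$.
\end{enumerate}
Thus there is no genuine obstacle. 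The only point to verify is that the convention $\Z_1=\{0\}$ from Section~\ref{sec:preliminaries} is used consistently, which it is. As a byproduct we also get that $\operatorname{Tor}(K_0(\cla_\xi))$ is generated by $[1_{\cla_\xi}]_0$, which equals $-i_*([P_\Omega]_0)$.
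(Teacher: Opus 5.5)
Your proposal is correct and matches the paper, which states this corollary without separate proof as the immediate $n=2$ specialization of Theorem~\ref{thm:k-theory}, using $\Z_{d(n-1)}=\Z_d$. Your check that the theorem's argument still works at $n=2$ (where $[1_{\clq(O_2)}]_0=0$ and $[1_{\cla_\xi}]_0=-i_*([P_\Omega]_0)$ has order $d$) is accurate but not needed.
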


The assumption $d\geq 1$ in Theorem~\ref{thm:k-theory} is essential, as the
degree-zero case has different $K$-theoretic behavior.

\begin{proposition}\label{prop:degree-zero}
Let \(\xi=\lambda\Omega\), where \(\lambda\in\mathbb T\). Then
$K_0(\cla_\xi)\cong\Z^2,
\,
K_1(\cla_\xi)\cong\Z.$ Moreover, the isomorphism on $K_0$ may be chosen so that
\[
[1_{\mathcal A_\xi}]_0\longmapsto(1,0).
\]
\end{proposition}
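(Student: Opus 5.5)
The plan is to run the same six-term exact sequence as in the proof of Theorem~\ref{thm:k-theory}, now for the extension of Corollary~\ref{cor:finite-degree-extension} with $d=0$. Since $\xi=\lambda\Omega$, Theorem~\ref{thm:operator_characterization_symbol} shows that $W_\xi$ is unitary, so $\operatorname{ind}(W_\xi)=0$. As $\Phi_\xi(W_\xi)=u$ and $[u]_1$ generates $K_1(\clq(O_n))\cong\Z$, our boundary convention gives $\partial_1=0$. The map $\partial_0$ vanishes because $K_1(\clk(\clf_n^2))=0$. The six-term sequence therefore breaks into
\[
0\to K_1(\cla_\xi)\to K_1(\clq(O_n))\to 0,
\qquad
0\to\Z\overset{i_*}{\longrightarrow}K_0(\cla_\xi)\overset{\Phi_{\xi*}}{\longrightarrow}\Z\oplus\Z_{n-1}\to0 .
\]
The first sequence gives $K_1(\cla_\xi)\cong\Z$, generated by $[W_\xi]_1$.

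For $K_0$, set $x=[1_{\cla_\xi}]_0$ and $y=i_*([P_\Omega]_0)$. Now $y$ has infinite order, since $i_*$ is injective. The computation leading to \eqref{eq:vacuum-unit-relation} uses only the Cuntz--Toeplitz relations, so it still holds: $y=-(n-1)x$. Consequently $x$ has infinite order. Moreover, $\Phi_{\xi*}(x)=(0,\overline1)$, which has order $n-1$.

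The main step is to show that $K_0(\cla_\xi)$ is torsion-free of rank $2$. Its rank is $1+1=2$ from the extension. Let $G=\ker r$, where $r=\operatorname{pr}_\Z\circ\Phi_{\xi*}$ as before. Then $G$ is an extension $0\to\Z\to G\to\Z_{n-1}\to0$.

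I will show that $G=\langle x\rangle$. The element $y=-(n-1)x$ generates $i_*(\Z)$. Since $\Phi_{\xi*}(x)$ generates $\Z_{n-1}$, every $g\in G$ satisfies $g-mx\in\ker\Phi_{\xi*}=\langle y\rangle\subseteq\langle x\rangle$ for a suitable $m$. Hence $G=\langle x\rangle\cong\Z$, because $x$ has infinite order.

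Choosing a splitting of $r$ as before gives $K_0(\cla_\xi)\cong\Z\oplus G\cong\Z^2$. Reordering the summands so that $x\mapsto(1,0)$ yields the stated normalization.

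The only delicate point is the torsion-freeness of $G$, that is, that $\Z\to G\to\Z_{n-1}$ does not split off torsion. The relation $y=-(n-1)x$ handles this directly. I expect no further obstacle beyond checking that $\partial_1=0$, which follows from the unitarity of $W_\xi$.
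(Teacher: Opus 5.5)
Your proposal is correct and follows the paper's proof essentially step for step: you get $\partial_1=0$ from the unitarity of $W_\xi$, you use $i_*([P_\Omega]_0)=-(n-1)[1_{\cla_\xi}]_0$ to show that $\ker r=\Phi_{\xi*}^{-1}(\{0\}\oplus\Z_{n-1})$ is the infinite cyclic group $\langle [1_{\cla_\xi}]_0\rangle$, and you then split off the free quotient $\Z$. You are slightly more explicit than the paper in identifying $[W_\xi]_1$ as a generator of $K_1(\cla_\xi)$ and in justifying the normalization $[1_{\cla_\xi}]_0\mapsto(1,0)$, which the paper leaves implicit.
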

\begin{proof}
Suppose that \(\xi=\lambda\Omega\), where \(\lambda\in\mathbb T\).
Equivalently, \(\Theta_\xi\) is the constant inner function \(\lambda\),
viewed as a finite Blaschke product of degree zero.
By
Theorem~\ref{thm:operator_characterization_symbol}, $W_\xi$ is unitary. Hence $\operatorname{ind}(W_\xi)=0,$
and the connecting map $\partial_1:K_1(\clq(O_n))\longrightarrow K_0(\clk(\clf_n^2))$
is the zero map.
The six-term exact sequence therefore gives
$K_1(\cla_\xi)\cong \Z,$
in sharp contrast to the $d\ge1$ case, where $K_1(\cla_\xi)$ always vanishes.
We now compute $K_0(\cla_\xi)$. Since the boundary map is zero, the six-term exact sequence gives
\[
0
\longrightarrow
\mathbb Z
\xrightarrow{i_*}
K_0(\cla_\xi)
\longrightarrow
\mathbb Z\oplus\mathbb Z_{n-1}
\longrightarrow
0.
\]
Again, $i_*(1)=-(n-1)[1_{\cla_\xi}]_0.$
Since $i_*$ is injective, the class $[1_{\cla_\xi}]_0$ has infinite order.
Set
$
x=[1_{\cla_\xi}]_0
$
and let
$
H
=\Phi_{\xi *}^{-1}
\bigl({0}\oplus\Z_{n-1}\bigr).
$
Since
$
\Phi_{\xi *}(x)
=[1_{\clq(O_n)}]_0
=(0,\overline{1}),
$
the element $x$ maps to a generator of the torsion summand of
$K_0(\clq(O_n))$.
Let $a\in H$. Then
$
\Phi_{\xi *}(a)
=m\Phi_{\xi *}(x)
$
for some $m\in\Z$. Hence
$
a-mx
\in
\ker\Phi_{\xi *}
=\operatorname{im}i_*.
$
Since
$
\operatorname{im}i_*
=\bigl\langle -(n-1)x\bigr\rangle
\subseteq
\langle x\rangle,
$
we obtain $a\in\langle x\rangle$. Conversely, since
$\Phi_{\xi *}(x)\in \{0\}\oplus\Z_{n-1}$, we have
$\langle x\rangle\subseteq H$. Thus
$
H=\langle x\rangle\cong\Z.
$
Moreover,
$$
K_0(\cla_\xi)/H
\cong
\bigl(\Z\oplus\Z_{n-1}\bigr)
\big/
\bigl(\{0\}\oplus\Z_{n-1}\bigr)
\cong
\Z.
$$
Therefore, there is a short exact sequence
$$
0
\longrightarrow
\Z
\longrightarrow
K_0(\cla_\xi)
\longrightarrow
\Z
\longrightarrow
0.
$$
Since $\Z$ is free, this sequence splits. Consequently,
$$
K_0(\cla_{\xi})
\cong
\Z^2,
\qquad
K_1(\cla_{\xi})
\cong
\Z.
$$
\end{proof}

\begin{example}\label{ex:n3-d2}
Let \(n=3\) and choose $\xi=e_1^{\otimes 2}\in\mathcal M^\perp.$
Under the unitary identification
$U_1:\mathcal M^\perp\longrightarrow H^2(\mathbb D),
\,
U_1(e_1^{\otimes p})=z^p,$
the symbol \(\xi\) corresponds to the finite Blaschke product
\[
\Theta(z)=U_1\xi=z^2.
\]
Thus \(\xi\) has finite Blaschke degree \(d=2\). By
Theorem~\ref{thm:index}, $\operatorname{ind}(W_\xi)=-2,$
and $I-W_\xi W_\xi^*$
is a rank-two projection. Theorem~\ref{thm:k-theory} gives
\[
K_1(\mathcal A_\xi)=0, \qquad
K_0(\mathcal A_\xi)
\cong
\mathbb Z\oplus\mathbb Z_{d(n-1)}
=
\mathbb Z\oplus\mathbb Z_4.
\] 
We can also see explicitly why the torsion summand is
\(\mathbb Z_4\), rather than \(\mathbb Z_2\oplus\mathbb Z_2\).
Let $x=[1_{\mathcal A_\xi}]_0 \in K_0(\mathcal A_\xi).$
Since $P_\Omega
=
I-\sum_{i=1}^3S_iS_i^*$
and each \(S_i\) is an isometry, we have
\[
i_*([P_\Omega]_0)
=
[1_{\mathcal A_\xi}]_0
-
\sum_{i=1}^3[S_iS_i^*]_0
=
-2x.
\]
The element \(i_*([P_\Omega]_0)\) has order \(d=2\), while the image
of \(x\) in $K_0(\mathcal Q_3)\cong\mathbb Z\oplus\mathbb Z_2$
is the class of the unit and has order \(2\). Therefore
\[
2x=-i_*([P_\Omega]_0)\neq0,
\qquad
4x=0.
\]
Hence \(x\) has order \(4\) and generates the torsion subgroup of
\(K_0(\mathcal A_\xi)\). Thus
\[
K_0(\mathcal A_\xi)
\cong
\mathbb Z\oplus\mathbb Z_4.
\]
\end{example}

\begin{corollary}\label{cor:extension-invariant}
Let \(\xi\) and \(\eta\) be finite Blaschke symbols of degrees
\(d_\xi\) and \(d_\eta\), respectively.
For $\alpha\in\{\xi,\eta\}$, let
$\partial_1^\alpha:
K_1(\mathcal Q(O_n))
\longrightarrow
K_0(\mathcal K(\mathcal F_n^2))
$
denote the connecting map associated with $\mathcal A_\alpha$. Under the identifications determined by
\[
[u]_1\longmapsto1,
\qquad
[P_\Omega]_0\longmapsto1,
\]
the map $\partial_1^\alpha$ is multiplication by $-d_\alpha$.
Consequently, if
\(d_\xi\neq d_\eta\), then the two extensions have different connecting
maps.
\end{corollary}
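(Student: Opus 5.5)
The plan is to read the connecting map directly off the Fredholm index of a concrete lift, as in the discussion preceding Theorem~\ref{thm:k-theory}. Fix $\alpha\in\{\xi,\eta\}$. By Corollary~\ref{cor:finite-degree-extension} we have the essential extension
\[
0\longrightarrow\clk(\clf_n^2)\longrightarrow\cla_\alpha\overset{\Phi_\alpha}{\longrightarrow}\clq(O_n)\longrightarrow0 .
\]
By Theorem~\ref{thm:quotient-isomorphism}, $\Phi_\alpha(W_\alpha)=\rho_\alpha^{-1}(\pi(W_\alpha))=u$. Hence $W_\alpha$ is a lift of the canonical unitary $u$. By Theorem~\ref{thm:index} it is Fredholm with $\operatorname{ind}(W_\alpha)=-d_\alpha$.

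The boundary-map convention of Subsection~\ref{subsec:k-theory-conventions} then gives $\partial_1^\alpha([u]_1)=\operatorname{ind}(W_\alpha)=-d_\alpha$ in $K_0(\clk(\clf_n^2))$. In that group the rank-one projection $P_\Omega$ represents $1$.

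Next I would use the fact that $[u]_1$ generates $K_1(\clq(O_n))\cong\Z$, shown just before Theorem~\ref{thm:k-theory} using the degree-one symbol $e_1$. With the identifications $[u]_1\mapsto1$ and $[P_\Omega]_0\mapsto1$, the homomorphism $\partial_1^\alpha:\Z\to\Z$ is determined by its value on $1$. It is therefore multiplication by $-d_\alpha$.

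The key point to stress is that these identifications do not depend on $\alpha$. The quotient $\clq(O_n)$ is the abstract universal algebra and $u$ is its canonical generator, and the ideal is the same $\clk(\clf_n^2)$ with the same vacuum projection in both cases. So the two connecting maps are computed in a common coordinate system.

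Consequently, if $d_\xi\neq d_\eta$, then $\partial_1^\xi([u]_1)=-d_\xi\neq-d_\eta=\partial_1^\eta([u]_1)$, and the two connecting maps differ.

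The only subtle step is this independence of the identifications: the generator $[u]_1$ must be fixed once in $K_1(\clq(O_n))$ rather than chosen per extension. This is exactly why the paper transports everything through $\rho_\alpha^{-1}$ to the universal algebra. With that in place, everything else is immediate from Theorem~\ref{thm:index}.
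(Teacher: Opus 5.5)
Your proposal is correct and follows the paper's own proof essentially step for step: $W_\alpha$ is a Fredholm lift of the canonical unitary $u$ under $\Phi_\alpha$, the fixed boundary-map convention together with Theorem~\ref{thm:index} gives $\partial_1^\alpha([u]_1)=\operatorname{ind}(W_\alpha)=-d_\alpha$, and since $[u]_1$ generates $K_1(\clq(O_n))$ this determines $\partial_1^\alpha$. You also say explicitly that the identifications are fixed once, in the universal algebra $\clq(O_n)$ and the common ideal $\clk(\clf_n^2)$, independently of $\alpha$; this makes precise a point the paper leaves implicit when it compares the two connecting maps.
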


\begin{proof}
Following the boundary-map computation in the proof of
Theorem~\ref{thm:k-theory}, the canonical odometer unitary
$u\in\clq(O_n)$ represents the generator of $K_1(\clq(O_n))\cong \Z$, and
$W_\alpha$ is a Fredholm lift of $u$. Therefore
\[
\partial_1^\alpha([u]_1)
=
\operatorname{ind}(W_\alpha)
=
-d_\alpha
\]
by Theorem~\ref{thm:index}. Hence $\partial_1^\alpha$ is multiplication by
$-d_\alpha$.
\end{proof}

The resulting $K$-groups also distinguish the algebras arising from
symbols of different degrees.

\begin{corollary}\label{cor:non-isomorphism}
Let $\xi$ and $\eta$ be finite Blaschke symbols of degrees
$d_\xi,d_\eta\geq 0$, respectively. If $d_\xi\neq d_\eta$, then
$$
K_0(\cla_\xi)\not\cong K_0(\cla_\eta)
$$
as abelian groups. Consequently,
$
\cla_\xi\not\cong\cla_\eta
$
as $C^*$-algebras.
\end{corollary}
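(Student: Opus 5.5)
The proof compares the $K_0$-groups computed in Theorem~\ref{thm:k-theory} and Proposition~\ref{prop:degree-zero}, using an invariant of abelian groups that determines the degree. By those results, $K_0(\cla_\xi)\cong\Z\oplus\Z_{d(n-1)}$ if $d\geq1$, and $K_0(\cla_\xi)\cong\Z^2$ if $d=0$. The natural invariant is the torsion subgroup. For $d\geq1$, Theorem~\ref{thm:k-theory} gives $\operatorname{Tor}(K_0(\cla_\xi))\cong\Z_{d(n-1)}$, a finite cyclic group of order $d(n-1)$. For $d=0$, the group $\Z^2$ is torsion-free.

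Without loss of generality assume $d_\xi< d_\eta$, and split into two cases.

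\emph{Case $d_\xi=0$.} Then $d_\eta\geq1$, so $d_\eta(n-1)\geq n-1\geq1$. If $d_\eta(n-1)\geq2$, then $K_0(\cla_\eta)$ has nonzero torsion while $\Z^2$ has none, so the groups differ. The subtlety is $n=2$ and $d_\eta=1$, where $\Z_{1}=\{0\}$ by the paper's convention. Then $K_0(\cla_\eta)\cong\Z$ and $K_0(\cla_\xi)\cong\Z^2$, and these are distinguished by rank: $\dim_{\mathbb Q}$ of $K_0\otimes\mathbb Q$ is $1$ versus $2$. In general, the rank is $2$ when $d=0$ and $1$ when $d\geq1$, so rank alone settles this case for all $n$.

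\emph{Case $1\leq d_\xi<d_\eta$.} Both groups have rank one, so compare torsion subgroups, which are isomorphism invariants. These are cyclic of orders $d_\xi(n-1)$ and $d_\eta(n-1)$. Since $n-1\geq1$, these orders differ, so the torsion subgroups are not isomorphic, and hence neither are the $K_0$-groups.

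Finally, a $*$-isomorphism $\cla_\xi\cong\cla_\eta$ would induce a group isomorphism $K_0(\cla_\xi)\cong K_0(\cla_\eta)$ by functoriality of $K_0$, contradicting the above. So $\cla_\xi\not\cong\cla_\eta$.

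The only delicate point is the degenerate case $n=2$, $d=1$, where the torsion vanishes and one must use rank rather than torsion. Comparing the pair (rank, order of torsion) handles all cases uniformly: this pair is $(2,1)$ for $d=0$ and $(1,d(n-1))$ for $d\geq1$, and it is injective in $d$.
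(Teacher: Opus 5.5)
Your proof is correct and follows the paper's argument. For two positive degrees you compare the torsion orders $d_\xi(n-1)\neq d_\eta(n-1)$; when one degree is zero you compare free ranks $2$ versus $1$; and you conclude by functoriality of $K_0$. Your separate treatment of $n=2$, $d_\eta=1$ is harmless but redundant, since the rank comparison already covers every $d=0$ versus $d\geq 1$ case, exactly as in the paper.
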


\begin{proof}
Suppose first that $d_\xi,d_\eta\geq 1$. By
Theorem~\ref{thm:k-theory},
$
K_0(\cla_\alpha)
\cong
\mathbb Z\oplus\mathbb Z_{d_\alpha(n-1)},
\,
\alpha\in\{\xi,\eta\}.
$
Hence the torsion subgroup of $K_0(\cla_\alpha)$ has order
$d_\alpha(n-1)$. Since $d_\xi\neq d_\eta$, these torsion subgroups
have different orders, and therefore
$$
K_0(\cla_\xi)\not\cong K_0(\cla_\eta).
$$
It remains to consider the case in which one of the degrees is zero.
Without loss of generality, assume that $d_\xi=0$ and $d_\eta\geq 1$.
By Proposition~\ref{prop:degree-zero},
$
K_0(\cla_\xi)\cong\mathbb Z^2,
$
whereas Theorem~\ref{thm:k-theory} gives
$
K_0(\cla_\eta)
\cong
\mathbb Z\oplus\mathbb Z_{d_\eta(n-1)}.
$
The former group has free rank $2$, while the latter has free rank $1$.
Thus these groups are not isomorphic.
Since $K$-theory is invariant under $*$-isomorphism, it follows that
$\cla_\xi$ and $\cla_\eta$ cannot be $*$-isomorphic.
\end{proof}

\begin{remark}
The preceding corollary distinguishes finite Blaschke symbols of different
degrees, but does not provide a complete classification of the algebras
${\cla_\xi}$. Indeed, symbols of the same degree have isomorphic pointed
$K$-groups. Distinguishing such
algebras may therefore require finer extension-theoretic information,
such as the Busby invariants of the essential extensions
\[
0
\longrightarrow
\clk(\clf_n^2)
\longrightarrow
\cla_\xi
\longrightarrow
\clq(O_n)
\longrightarrow
0.
\]
\end{remark}

\section*{Acknowledgements}
The author thanks the Department of Mathematics,
Technion--Israel Institute of Technology, Haifa, Israel, for its
institutional support and for providing a conducive research environment
during the preparation of this work.

\end{document}